\newtheorem{theorem}{Theorem}[section]
\newtheorem{proposition}[theorem]{Proposition}
\newtheorem{corollary}[theorem]{Corollary}
\newtheorem{lemma}[theorem]{Lemma}
\begin{document}

\title[Minkowski inequality]{A Minkowski inequality for hypersurfaces in the Anti-deSitter-Schwarzschild manifold}
\author{Simon Brendle, Pei-Ken Hung, and Mu-Tao Wang}
\begin{abstract}
We prove a sharp inequality for hypersurfaces in the $n$-dimensional Anti-deSitter-Schwarzschild manifold for general $n\geq 3$. This inequality generalizes the classical Minkowski inequality for surfaces in the three dimensional Euclidean space, and has a natural interpretation in terms of the Penrose inequality for collapsing null shells of dust. The proof relies on a new monotonicity formula for inverse mean curvature flow, and uses a geometric inequality established by the first author in \cite{Brendle}.
\end{abstract}
\address{Department of Mathematics \\ Stanford University \\ Stanford, CA 94305}
\address{Department of Mathematics \\ Columbia University \\ 2990 Broadway \\ New York, NY 10027}
\address{Department of Mathematics \\ Columbia University \\ 2990 Broadway \\ New York, NY 10027}
\thanks{The first author was supported in part by the National Science Foundation under grants DMS-0905628 and DMS-1201924. The third author was supported by the National Science Foundation under grant DMS-1105483. Part of this work was carried out while the third author was visiting the Taida Institute for Mathematical Sciences in Taipei, Taiwan.}

\maketitle

\section{Introduction}

The classical Minkowski inequality for a closed convex surface $\Sigma$ in $\mathbb{R}^3$ states that
\[\int_\Sigma H \, d\mu \geq \sqrt{16 \pi \, |\Sigma|},\] 
where $H$ is the mean curvature (i.e. the trace of the second fundamental form) and $|\Sigma|$ denotes the area of $\Sigma$ (cf. \cite{Minkowski}). For a convex hypersurface $\Sigma$ in $\mathbb{R}^n$, we have
\[\int_\Sigma H \, d\mu \geq (n-1) \, |S^{n-1}|^{\frac{1}{n-1}} \, |\Sigma|^{\frac{n-2}{n-1}}.\]
This was generalized to a mean convex and star-shaped surface using the method of inverse mean curvature flow (cf. \cite{Guan-Li}, \cite{Guan-Ma-Trudinger-Zhu}). Huisken recently showed that the assumption that $\Sigma$ is star-shaped can be replaced by the assumption that $\Sigma$ is outward-minimizing. Gallego and Solanes \cite{Gallego-Solanes} have obtained a generalization of Minkowski's inequality to the hyperbolic three space; however, this result does not seem to be sharp.

In this paper, we extend Minkowski's inequality to the case of surfaces in the Anti-deSitter Schwarzschild manifold. Let us recall the definition of the Anti-deSitter-Schwarzschild manifold. We fix a real number $m > 0$, and let $s_0$ denote the unique positive solution of the equation $1 + s_0^2 - m \, s_0^{2-n} = 0$. We then consider the manifold $M = S^{n-1} \times [s_0,\infty)$ equipped with the Riemannian metric
\[\bar{g} = \frac{1}{1 - m \, s^{2-n} + s^2} \, ds \otimes ds + s^2 \, g_{S^{n-1}},\]
where $g_{S^{n-1}}$ is the standard round metric on the unit sphere $S^{n-1}$. The sectional curvatures of $(M,\bar{g})$ approach $-1$ near infinity, so $\bar{g}$ is asymptotically hyperbolic. Moreover, the scalar curvature of $(M,\bar{g})$ equals $-n(n-1)$. The boundary $\partial M = S^{n-1} \times \{s_0\}$ is referred to as the horizon.

The Anti-deSitter Schwarzschild spaces are examples of static spaces. If we define
\begin{equation}
\label{static_potential}
f = \sqrt{1 - m \, s^{2-n} + s^2},
\end{equation}
then the function $f$ satisfies
\begin{equation}
\label{static_metric}
(\bar{\Delta} f) \, \bar{g}-\bar{D}^2 f+f \, \text{\rm Ric}=0.
\end{equation}
Taking the trace in \eqref{static_metric} gives $\bar{\Delta} f = nf$.

In general, a Riemannian metric is called static if it satisfies \eqref{static_metric} for some positive function $f$. The condition \eqref{static_metric} guarantees that the Lorentzian warped product $-f^2 \, dt \otimes dt+\bar{g}$ is a solution of Einstein's equations.

We now state the main result of this paper:

\begin{theorem}
\label{thm.a}
Let $\Sigma$ be a compact mean convex, star-shaped hypersurface $\Sigma$ in the AdS-Schwarzschild space, and let $\Omega$ denote the region bounded by $\Sigma$ and the horizon $\partial M$. Then
\begin{align*}
&\int_\Sigma f \, H \, d\mu - n(n-1) \int_\Omega f \, d\text{\rm vol} \\
&\geq (n-1) \, |S^{n-1}|^{\frac{1}{n-1}} \, \big ( |\Sigma|^{\frac{n-2}{n-1}} - |\partial M|^{\frac{n-2}{n-1}} \big ).
\end{align*}
Moreover, equality holds if and only if $\Sigma$ is a coordinate sphere, i.e. $\Sigma = S^{n-1} \times \{s\}$ for some number $s \in [s_0,\infty)$.
\end{theorem}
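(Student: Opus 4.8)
The plan is to flow $\Sigma$ by inverse mean curvature flow (IMCF) and produce a monotone quantity interpolating between $\Sigma$ and a coordinate sphere. Let $\Sigma_t$, $t\ge 0$, solve $\partial_t x = H^{-1}\nu$ with $\Sigma_0=\Sigma$, where $\nu$ is the outward unit normal and $H>0$; write $\Omega_t$ for the region bounded by $\Sigma_t$ and $\partial M$. Since $\Sigma$ is mean convex and star-shaped, adapting the theory of Gerhardt and Urbas to the asymptotically hyperbolic background gives a smooth solution for all $t\in[0,\infty)$ along which $\Sigma_t$ stays star-shaped and mean convex, $|\Sigma_t| = |\Sigma|\,e^t$, and $\Sigma_t$ is, for large $t$, a graph with small oscillation over a coordinate sphere $S^{n-1}\times\{s(t)\}$ with $s(t)\to\infty$. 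I would then consider
\[
Q(t) = |\Sigma_t|^{-\frac{n-2}{n-1}}\Bigl(\int_{\Sigma_t} f\,H\,d\mu - n(n-1)\int_{\Omega_t} f\,d\mathrm{vol} + (n-1)\,|S^{n-1}|^{\frac1{n-1}}\,|\partial M|^{\frac{n-2}{n-1}}\Bigr).
\]
Using $H = (n-1)f/s$ on a coordinate sphere together with the identity $m = s_0^{n-2}+s_0^{\,n}$, a direct computation shows $Q \equiv (n-1)\,|S^{n-1}|^{\frac1{n-1}}$ on every coordinate sphere. Granting that $Q$ is non-increasing, one lets $t\to\infty$: the integrals in $Q$ converge to their values on $S^{n-1}\times\{s(t)\}$, so $Q(0) \ge \lim_{t\to\infty}Q(t) = (n-1)\,|S^{n-1}|^{\frac1{n-1}}$, which is exactly the asserted inequality after multiplying by $|\Sigma|^{\frac{n-2}{n-1}}$ and using $|\partial M| = s_0^{n-1}|S^{n-1}|$.

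Everything hinges on $Q'(t)\le 0$. The ingredients are the evolution equations $\partial_t\,d\mu = d\mu$, $\partial_t H = -\Delta H^{-1} - (|A|^2 + \overline{\mathrm{Ric}}(\nu,\nu))H^{-1}$, $\partial_t f = H^{-1}\langle\bar D f,\nu\rangle$, and $\tfrac{d}{dt}\int_{\Omega_t}f\,d\mathrm{vol} = \int_{\Sigma_t}H^{-1}f\,d\mu$. Differentiating $\int_{\Sigma_t}fH\,d\mu$, integrating the $\Delta H^{-1}$ term by parts on the closed hypersurface $\Sigma_t$, and substituting the static equation \eqref{static_metric} in the forms $\bar\Delta f = nf$ and $\bar D^2 f(\nu,\nu) = nf + f\,\overline{\mathrm{Ric}}(\nu,\nu)$, all Ricci terms cancel and one is left with
\[
\frac{d}{dt}\int_{\Sigma_t} f\,H\,d\mu = 2\int_{\Sigma_t}\langle\bar D f,\nu\rangle\,d\mu - \int_{\Sigma_t}\frac{f\,|A|^2}{H}\,d\mu + \int_{\Sigma_t}f\,H\,d\mu .
\]
Feeding this into $Q'$, one bounds $\tfrac1{n-1}\int_{\Sigma_t} fH\,d\mu - \int_{\Sigma_t} H^{-1}f|A|^2\,d\mu \le 0$ by the Cauchy--Schwarz inequality $|A|^2 \ge H^2/(n-1)$, rewrites $\int_{\Sigma_t}\langle\bar Df,\nu\rangle\,d\mu = n\int_{\Omega_t}f\,d\mathrm{vol} + |\bar Df|\,|\partial M|$ by the divergence theorem (note $f$ vanishes on the minimal horizon $\partial M$ while $\bar Df$ does not), and finally invokes the Heintze--Karcher-type inequality of \cite{Brendle},
\[
(n-1)\int_{\Sigma_t}\frac{f}{H}\,d\mu \ge n\int_{\Omega_t}f\,d\mathrm{vol} + s_0^{\,n}\,|S^{n-1}|,
\]
with equality only on coordinate spheres. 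After these substitutions every remaining curvature and boundary constant cancels identically --- and it is precisely this cancellation that forces the shift $(n-1)|S^{n-1}|^{1/(n-1)}|\partial M|^{(n-2)/(n-1)}$ in the definition of $Q$ --- so $Q'(t)\le 0$.

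For rigidity: if equality holds in the theorem then $Q(0) = \lim_{t\to\infty}Q(t)$, so the non-increasing function $Q$ is constant and in particular $Q'(0)=0$. Tracing back through the previous paragraph, $Q'(0)=0$ forces equality both in the Cauchy--Schwarz step (so $\Sigma$ is totally umbilic) and in the inequality of \cite{Brendle}; by the rigidity statement there, $\Sigma$ is a coordinate sphere. Conversely, coordinate spheres give equality by the first-paragraph computation. The two steps I expect to be the main obstacle are: (i) establishing long-time existence and, crucially, the precise asymptotics of IMCF in this asymptotically hyperbolic setting, so that $\lim_{t\to\infty}Q(t) = (n-1)|S^{n-1}|^{1/(n-1)}$ is rigorously justified; and (ii) organizing the monotonicity computation so that the static equation and Brendle's inequality conspire to cancel every constant exactly --- any slip would destroy the argument, and this cancellation is what dictates the form of $Q$.
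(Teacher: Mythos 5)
Your overall framework matches the paper's: you evolve $\Sigma$ by inverse mean curvature flow, consider the same normalized quantity $Q(t)$ (your constant shift $(n-1)|S^{n-1}|^{1/(n-1)}|\partial M|^{(n-2)/(n-1)}$ equals the paper's $(n-1)s_0^{n-2}|S^{n-1}|$ once you use $|\partial M| = s_0^{n-1}|S^{n-1}|$), and your monotonicity derivation is essentially the paper's: the static equation eliminates the Ricci terms, Cauchy--Schwarz gives $|A|^2 \geq H^2/(n-1)$, the divergence theorem with the horizon boundary term (correctly including $|\bar D f|$ there, and the identity $m = s_0^{n-2} + s_0^n$) handles $\int\langle\bar D f,\nu\rangle$, and Brendle's Heintze--Karcher-type inequality closes the loop. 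All of that is right. The rigidity discussion is also in line with the paper's, at least in the $m>0$ case.

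However, there is a genuine gap in the step you flagged as ``(i),'' and it is not just a matter of being careful: your plan for that step would not work. You claim that ``the integrals in $Q$ converge to their values on $S^{n-1}\times\{s(t)\}$'' and hence $\lim_{t\to\infty}Q(t) = (n-1)|S^{n-1}|^{1/(n-1)}$. But in the asymptotically hyperbolic setting, the evolving hypersurfaces do \emph{not} become round fast enough for this to hold; this is exactly the phenomenon discovered by Neves for the Hawking mass under IMCF in such backgrounds. The second fundamental form estimate $|h^j_i - \delta^j_i| = O(t^2 e^{-2t/(n-1)})$ that one can actually establish is a scale-invariant statement which does not force the radial profile $\lambda(\cdot,t)$ (suitably normalized) to tend to a constant on $S^{n-1}$; in general the limit of $Q(t)$ depends on the initial surface and need not equal $(n-1)|S^{n-1}|^{1/(n-1)}$. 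What one can prove is a \emph{lower bound}: after a careful expansion of $\int_{\Sigma_t} f(H - n + 1)\,d\mu$, $\int_{\Sigma_t}\langle\bar D f,\nu\rangle\,d\mu$, and $\int_{\Omega_t} f\,d\mathrm{vol}$ in terms of $\lambda$ and its gradient, one gets
\[
Q(t) \geq (n-1)\Bigl(\int_{S^{n-1}}\lambda^{n-1}\Bigr)^{-\frac{n-2}{n-1}}\Bigl(\tfrac12\int_{S^{n-1}}\lambda^{n-4}|\nabla\lambda|^2 + \int_{S^{n-1}}\lambda^{n-2}\Bigr) - o(1),
\]
and then $\liminf_{t\to\infty}Q(t)\geq (n-1)|S^{n-1}|^{1/(n-1)}$ follows from a sharp Sobolev inequality on $S^{n-1}$ due to Beckner, applied to $u=\lambda$:
\[
\tfrac12\int_{S^{n-1}}u^{n-4}|\nabla u|^2 + \int_{S^{n-1}}u^{n-2} \geq |S^{n-1}|^{\frac1{n-1}}\Bigl(\int_{S^{n-1}}u^{n-1}\Bigr)^{\frac{n-2}{n-1}},
\]
with equality iff $u$ is constant. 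This Beckner inequality is the key lemma missing from your proposal; without it the limit step cannot be completed, and replacing it by ``convergence to coordinate spheres'' is not merely unproved but false in general. The gradient term, rather than being negligible, is precisely what saves the argument.
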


The inequality in Theorem \ref{thm.a} has a natural interpretation in terms of the Penrose inequality for collapsing null shells of dust. This is discussed in more detail in \cite{Brendle-Wang}.

We now discuss several results which are included in Theorem \ref{thm.a} as limiting cases. First, if we send $m \to 0$, then $s_0 \to 0$ and the AdS-Schwarzschild metric reduces to hyperbolic metric
\[\frac{1}{1 + s^2} \, ds \otimes ds + s^2 \, g_{S^{n-1}}.\]
Moreover, the static potential becomes $f = \sqrt{1+s^2} = \cosh r$, where $r$ denotes the geodesic distance from the origin. As a result, we obtain the following inequality for hypersurfaces in hyperbolic space:

\begin{theorem}
\label{thm.b}
Let $\Sigma$ be a compact mean convex hypersurface $\Sigma$ in the hyperbolic space $\mathbb{H}^n$ which is star-shaped with respect to the origin, and let $\Omega$ denote the region bounded by $\Sigma$. Then
\begin{align*}
&\int_\Sigma (f \, H - (n-1) \, \langle \bar{\nabla} f,\nu \rangle) \, d\mu \\
&\geq (n-1) \, |S^{n-1}|^{\frac{1}{n-1}} \, |\Sigma|^{\frac{n-2}{n-1}}.
\end{align*}
Moreover, equality holds if and only if $\Sigma$ is a geodesic sphere centered at the origin.
\end{theorem}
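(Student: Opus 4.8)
The plan is to obtain Theorem \ref{thm.b} from Theorem \ref{thm.a} by letting the mass $m$ tend to zero. Let $\Sigma\subset\mathbb{H}^n$ be compact, mean convex, and star-shaped with respect to the origin, and model $\mathbb{H}^n$ as $S^{n-1}\times[0,\infty)$ with the metric $\frac{1}{1+s^2}\,ds\otimes ds+s^2\,g_{S^{n-1}}$. Then $\Sigma$ is a graph $\{s=\sigma(\xi):\xi\in S^{n-1}\}$ with $0<\min\sigma\leq\max\sigma<\infty$. For $m>0$ the AdS-Schwarzschild manifold $(M,\bar g_m)$ is defined on $S^{n-1}\times[s_0,\infty)$, and since $s_0\to 0$ as $m\to 0$, once $m$ is small we have $s_0<\min\sigma$, so $\Sigma$ lies inside $(M,\bar g_m)$ and is still a graph over $S^{n-1}$, hence star-shaped in the sense of Theorem \ref{thm.a}. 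Moreover, because $n\geq 3$ the factor $s^{2-n}$ remains bounded on a fixed neighborhood of $\Sigma$, so $\bar g_m\to\bar g_{\mathbb{H}^n}$ and $f=\sqrt{1-m\,s^{2-n}+s^2}\to\sqrt{1+s^2}=\cosh r$ in $C^\infty$ there; in particular the mean curvature of $\Sigma$ with respect to $\bar g_m$ converges uniformly to its hyperbolic mean curvature and stays positive once $m$ is small. Thus Theorem \ref{thm.a} applies to $\Sigma$ inside $(M,\bar g_m)$ for all sufficiently small $m>0$.

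The next step is to rewrite the bulk term in Theorem \ref{thm.a} as a boundary integral. Since $\bar\Delta f=nf$, the divergence theorem on the region $\Omega$ enclosed by $\Sigma$ and the horizon gives
\[
n(n-1)\int_\Omega f\,d\mathrm{vol}=(n-1)\int_\Sigma\langle\bar\nabla f,\nu\rangle\,d\mu+(n-1)\int_{\partial M}\langle\bar\nabla f,\nu\rangle\,d\mu,
\]
where $\nu$ is the unit normal pointing out of $\Omega$. On $\partial M$ we have $f=0$, and a short computation using $1+s_0^2=m\,s_0^{2-n}$ shows that $|\bar\nabla f|$ is the constant $\kappa_m:=\frac{(n-2)+n\,s_0^2}{2s_0}$ on the horizon and that $\bar\nabla f$ points into $\Omega$; hence the horizon term equals $-(n-1)\,\kappa_m\,|\partial M|$. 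Substituting, the inequality of Theorem \ref{thm.a} becomes
\[
\int_\Sigma\big(f\,H-(n-1)\langle\bar\nabla f,\nu\rangle\big)\,d\mu\;\geq\;(n-1)\,|S^{n-1}|^{\frac{1}{n-1}}\,|\Sigma|^{\frac{n-2}{n-1}}-(n-1)\,|S^{n-1}|^{\frac{1}{n-1}}\,|\partial M|^{\frac{n-2}{n-1}}-(n-1)\,\kappa_m\,|\partial M|,
\]
all quantities on the left now being computed with respect to $\bar g_m$.

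Letting $m\to 0$, and using $|\partial M|=s_0^{\,n-1}\,|S^{n-1}|$ together with $s_0\to 0$, we have $|\partial M|^{\frac{n-2}{n-1}}=s_0^{\,n-2}\,|S^{n-1}|^{\frac{n-2}{n-1}}\to 0$ and $\kappa_m\,|\partial M|=\frac{(n-2)+n\,s_0^2}{2}\,s_0^{\,n-2}\,|S^{n-1}|\to 0$, where $n\geq 3$ is used so that $s_0^{\,n-2}\to 0$. By the $C^\infty$ convergence near $\Sigma$, the integrals $\int_\Sigma f\,H\,d\mu$ and $\int_\Sigma\langle\bar\nabla f,\nu\rangle\,d\mu$ and the area $|\Sigma|$ converge to the corresponding quantities for the hyperbolic metric, with $f=\cosh r$ in the limit. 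Passing to the limit in the displayed inequality yields exactly the inequality of Theorem \ref{thm.b}.

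Finally, the equality case. If $\Sigma=S^{n-1}\times\{s_*\}$, equality is checked by direct computation. For the converse, the limiting argument only gives the inequality, so rigidity has to be extracted separately: the proof of Theorem \ref{thm.a}---monotonicity of a suitable quantity along inverse mean curvature flow, with the inequality recovered in the limit $t\to\infty$---can be run with $m=0$, the horizon collapsing to the origin and $\Omega$ becoming the region enclosed by $\Sigma$ alone, and there equality forces the flow to consist of coordinate spheres, so $\Sigma$ itself must be a geodesic sphere centered at the origin. Carrying the rigidity through the limit, rather than just the inequality, is the only genuinely delicate point; everything else is a routine passage to the limit.
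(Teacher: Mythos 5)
Your derivation of the inequality from Theorem \ref{thm.a} by letting $m\to 0$ is exactly the route the paper has in mind, and your computations are correct: you correctly use the divergence theorem together with $\bar{\Delta}f=nf$ and the normalization $1+s_0^2=m\,s_0^{2-n}$ to rewrite the bulk term as boundary terms, you correctly compute $|\bar{\nabla}f|=\frac{(n-2)+n\,s_0^2}{2s_0}$ on the horizon (this matches the paper's identity $\int_{\Sigma_t}\langle\bar{\nabla}f,\nu\rangle\,d\mu=n\int_{\Omega_t}f\,d\mathrm{vol}+\frac{(n-2)m+2s_0^n}{2}\,|S^{n-1}|$, since $m=s_0^{n-2}+s_0^n$), and the error terms vanish as $m\to 0$ because $n\geq 3$.

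The equality case, however, has a genuine gap. You write that running the monotonicity argument with $m=0$ ``forces the flow to consist of coordinate spheres.'' That implication does not hold on its own. Equality in the monotonicity inequality forces $\Sigma_0$ to be umbilic, and in hyperbolic space umbilic closed hypersurfaces are geodesic spheres, but they may be centered at \emph{any} point, not necessarily the origin. The additional step the paper uses is the strict-inequality case of Beckner's sharp Sobolev inequality on $S^{n-1}$ (Proposition \ref{sharp_sobolev}): if the sphere is centered at $x_0\neq 0$, then the rescaled radial function $\lambda$ converges to a non-constant limit on $S^{n-1}$, which makes $\liminf_{t\to\infty}Q(t)>(n-1)\,|S^{n-1}|^{1/(n-1)}$, contradicting constancy of $Q$. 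Without this argument, one cannot conclude that the center is the origin. You flag the rigidity as ``the only genuinely delicate point,'' which is accurate, but the sketch you give of that delicate point is not correct as stated and needs to be replaced by the Sobolev-rigidity argument.
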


In particular, if the surface $\Sigma$ is very close to the origin, Theorem \ref{thm.b} reduces to the classical Minkowski inequality in $\mathbb{R}^n$. 

We next describe another limiting case of Theorem \ref{thm.a}. To that end, let us consider the rescaled metrics $m^{-\frac{2}{n-2}} \, \bar{g}$. After performing a change of variables, this metric can be written in the form  
\[\frac{1}{1 - s^{2-n} + m^{\frac{2}{n-2}} \, s^2} \, ds \otimes ds + s^2 \, g_{S^{n-1}}.\] 
If we send $m \to 0$, this metric converges to the standard Schwarzschild metric 
\[\frac{1}{1 - s^{2-n}} \, ds \otimes ds + s^2 \, g_{S^{n-1}},\] 
and the static potential $f$ converges to the static potential of the standard Schwarzschild manifold. Therefore, Theorem \ref{thm.a} implies a sharp Minkowski-type inequality for surfaces in the Schwarzschild manifold. 

The classical Minkowski inequality in $\mathbb{R}^n$ has important applications in general relativity, see \cite{Gibbons}. In particular, the total mean curvature integral appears in the definition of the Brown-York mass and Liu-Yau mass (cf. \cite{Liu-Yau1}, \cite{Liu-Yau2}). Our motivation came from the work \cite{Wang-Yau} in which a generalization of the positivity of Brown-York and Liu-Yau mass was considered when the reference space is a hyperbolic space. It was observed in \cite{Wang-Yau} that the mean curvature integral should be replaced by a weighted one in order to recover the right expression of mass (see \cite{Wang-Yau}, Theorem 1.4). The weighting factor is related to the coordinate functions of the embedding of a hyperboloid into the Minkowski space. The time component of the embedding can be chosen to be $\cosh r$ which is the same as the static potential here. In fact, the same weighting factor was considered in \cite{Shi-Tam} where another quasilocal mass with the hyperbolic space as reference was studied. We remark that the total mass for asymptotically hyperbolic manifolds has been considered by many authors, see e.g. \cite{Andersson-Dahl}, \cite{Chrusciel-Herzlich}, \cite{Chrusciel-Nagy}, \cite{Min-Oo}, \cite{Wang}, \cite{Zhang}. 

We now give an outline of the proof of Theorem \ref{thm.a}. An important tool in our proof is the inverse mean curvature flow. This method was employed in the spectacular proof of the Riemannian Penrose inequality in general relativity due to Huisken and Ilmanen \cite{Huisken-Ilmanen}. We start from a given mean convex, star-shaped hypersurface $\Sigma_0$, and evolve it by the inverse mean curvature flow. We show that the inverse mean curvature flow exists for all time, and that the evolving surfaces $\Sigma_t$ remain star-shaped for all $t \geq 0$. Moreover, we estimate the mean curvature and second fundamental form of $\Sigma_t$. More precisely, we prove that $|h_i^j-\delta_i^j| \leq O(t^2 \, e^{-\frac{2}{n-1} \, t})$. We note that the extra factor of $t^2$ can be removed, but we will not need this stronger estimate.

We next consider the quantity
\[Q(t) = |\Sigma_t|^{-\frac{n-2}{n-1}} \, \bigg ( \int_{\Sigma_t} f \, H \, d\mu - n(n-1) \int_\Omega f \, d\text{\rm vol} + (n-1) \, s_0^{n-2} \, |S^{n-1}| \bigg ),\]
where $f$ is the static potential defined above. It turns out that $Q(t)$ is monotone decreasing along the inverse mean curvature flow. The proof of this monotonicity property uses the fact that $(M,\bar{g})$ is static. We also use the inequality
\[(n-1) \int_{\Sigma_t} \frac{f}{H} \, d\mu \geq n \int_{\Omega_t} f \, d\text{\rm vol} + s_0^n \, |S^{n-1}|\]
(cf. \cite{Brendle}). This inequality was used in \cite{Brendle} to prove a generalization of Alexandrov's theorem (see also \cite{Brendle-Eichmair}).

Finally, we study the limit of $Q(t)$ as $t \to \infty$. The roundness estimate for $\Sigma_t$ is not strong enough to calculate the limit of $Q(t)$, and we expect that the limit of $Q(t)$ depends on the choice of the initial surface $\Sigma_0$. A similar issue arose in \cite{Neves}, where the limit of the Hawking mass was studied. However, we are able to give a lower bound for the limit of $Q(t)$. Using our estimate for the second fundamental form of $\Sigma_t$, we show that
\begin{align}
\label{asymptotics.Q}
Q(t) &\geq (n-1) \, \bigg ( \int_{S^{n-1}} \lambda^{n-1} \, d\text{\rm vol}_{S^{n-1}} \bigg )^{-\frac{n-2}{n-1}} \notag \\
&\cdot \bigg ( \frac{1}{2} \int_{S^{n-1}} \lambda^{n-4} \, |\nabla \lambda|_{g_{S^{n-1}}}^2 \, d\text{\rm vol}_{S^{n-1}} + \int_{S^{n-1}} \lambda^{n-2} \, d\text{\rm vol}_{S^{n-1}} \bigg ) - o(1),
\end{align}
where $\lambda$ is a positive function on $S^{n-1}$ which depends on $t$. In order to estimate the right hand side in \eqref{asymptotics.Q}, we use a sharp version of the Sobolev inequality on $S^{n-1}$ due to Beckner \cite{Beckner}. Using this inequality, we obtain
\[\liminf_{t \to \infty} Q(t) \geq (n-1) \, |S^{n-1}|^{\frac{1}{n-1}}.\]
Since $Q(t)$ is monotone decreasing, we conclude that $Q(0) \geq (n-1) \, |S^{n-1}|^{\frac{1}{n-1}}$. From this, Theorem \ref{thm.a} follows immediately.

We note that, after this paper was written, several related inequalities for hypersurfaces in hyperbolic space have appeared in the literature; see e.g. \cite{Lopes-de-Lima-Girao}, \cite{Wang-Xia}.

\section{Star-shaped hypersurfaces in the AdS-Schwarzschild manifold}

\begin{lemma}
By a change of variable, the AdS-Schwarzschild metric can be rewritten as
\[\bar{g} = dr \otimes dr+\lambda(r)^2 \, g_{S^{n-1}}\]
where $\lambda(r)$ satisfies the ODE 
\begin{equation}
\label{lambda'}
\lambda'(r)=\sqrt{1+\lambda^2-m\lambda^{2-n}}
\end{equation}
and the asymptotic expansion
\[\lambda(r) = \sinh(r) + \frac{m}{2n} \, \sinh^{1-n}(r) + O(\sinh^{-n-1}(r)).\]
\end{lemma}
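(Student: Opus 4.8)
The plan is to introduce the radial variable $r$ so that it measures arc length along the $s$-direction, i.e.\ to define the change of variable implicitly by $\frac{dr}{ds} = \bigl(1 - m\,s^{2-n} + s^2\bigr)^{-1/2} = \frac{1}{f(s)}$ and to set $\lambda(r) := s(r)$. Since $f(s)^2 = 1 - m\,s^{2-n} + s^2$ vanishes to first order at $s = s_0$ (its $s$-derivative there equals $(n-2)\,m\,s_0^{1-n} + 2s_0 > 0$) and is smooth and positive for $s > s_0$, the integral $\int_{s_0}^{s} f(\sigma)^{-1}\,d\sigma$ is finite for each $s$ and diverges as $s \to \infty$; hence $s \mapsto r$ is a smooth increasing bijection of $[s_0,\infty)$ onto a half-line. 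Substituting $ds = f(s)\,dr$ turns the term $\frac{1}{f(s)^2}\,ds\otimes ds$ into $dr\otimes dr$ and the coefficient $s^2$ of $g_{S^{n-1}}$ into $\lambda(r)^2$, so $\bar g = dr\otimes dr + \lambda(r)^2\,g_{S^{n-1}}$, and $\lambda'(r) = \frac{ds}{dr} = f(s) = \sqrt{1 + \lambda^2 - m\,\lambda^{2-n}}$, which is \eqref{lambda'}.

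For the asymptotic expansion I would pass to the inverse function $r(\lambda) = \gamma + \int_{s_0}^{\lambda}\bigl(1 + \sigma^2 - m\,\sigma^{2-n}\bigr)^{-1/2}\,d\sigma$, where $\gamma$ is the one free additive constant. Writing the integrand as $(1+\sigma^2)^{-1/2}\bigl(1 - \tfrac{m\,\sigma^{2-n}}{1+\sigma^2}\bigr)^{-1/2}$ and expanding, one gets $(1+\sigma^2)^{-1/2} + \tfrac{m}{2}\,\sigma^{2-n}(1+\sigma^2)^{-3/2} + O(\sigma^{-2n-1})$, and since $\int (1+\sigma^2)^{-1/2}\,d\sigma = \operatorname{arcsinh}\sigma$ while $\tfrac{m}{2}\,\sigma^{2-n}(1+\sigma^2)^{-3/2} = \tfrac{m}{2}\,\sigma^{-n-1} + O(\sigma^{-n-3})$, it follows that $r(\lambda) - \operatorname{arcsinh}\lambda$ tends to a finite limit as $\lambda\to\infty$, with $r(\lambda) - \operatorname{arcsinh}\lambda = c_0 - \tfrac{m}{2n}\,\lambda^{-n} + O(\lambda^{-n-2})$ for a constant $c_0$ depending on $m$ and $\gamma$. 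I would then fix $\gamma$ by the requirement $c_0 = 0$; this is precisely the normalization that forces $\lambda(r) \sim \sinh r$. Inverting and using $\cosh r = \sinh r + O(\sinh^{-1}r)$ and $\lambda^{-n} = \sinh^{-n}r + O(\sinh^{-2n}r)$, one obtains $\lambda = \sinh\bigl(r + \tfrac{m}{2n}\,\lambda^{-n} + O(\lambda^{-n-2})\bigr) = \sinh r + \tfrac{m}{2n}\,\sinh^{1-n}r + O(\sinh^{-n-1}r)$.

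As a cross-check, and as an alternative that makes the coefficient transparent, one can differentiate \eqref{lambda'} to get the second-order equation $\lambda'' = \lambda + \tfrac{(n-2)\,m}{2}\,\lambda^{1-n}$ (with first integral $(\lambda')^2 - \lambda^2 + m\,\lambda^{2-n} = 1$), substitute the ansatz $\lambda = \sinh r + \tfrac{m}{2n}\,\sinh^{1-n}r + \rho$, and verify by a direct computation that $\tfrac{m}{2n}\,\sinh^{1-n}r$ solves the linearized equation $\rho'' - \rho = \tfrac{(n-2)\,m}{2}\,\sinh^{1-n}r$ up to an inhomogeneity of size $O(\sinh^{-n-1}r)$; a comparison (or contraction) argument on $[r_1,\infty)$ then controls $\rho$.

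The only genuine work is in the asymptotic expansion: the warped-product form and \eqref{lambda'} are an immediate change of variables, but one must (i) carry the expansion of the integrand to enough orders to see that the remainder beyond $\tfrac{m}{2n}\,\lambda^{-n}$ is $O(\lambda^{-n-2})$ rather than merely $O(\lambda^{-n-1})$, and (ii) observe that the additive constant in the definition of $r$ must be chosen as the unique value making the $\sinh r$ normalization hold, so that the stated coefficient $\tfrac{m}{2n}$ is correct. Neither point is deep, but both require a little care.
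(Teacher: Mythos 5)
Your proof is correct and takes essentially the same route as the paper: define $r$ so that $dr/ds = 1/f(s)$, normalize the additive constant so that $r - \operatorname{arsinh}\lambda \to 0$ (the paper realizes this normalization by the explicit formula $r(s) = \int_0^s (1+t^2)^{-1/2}\,dt - \int_s^\infty\bigl((1+t^2-mt^{2-n})^{-1/2} - (1+t^2)^{-1/2}\bigr)\,dt$, which is precisely your condition $c_0 = 0$), expand the integrand, and invert. Your second-order ODE cross-check is an additional observation not in the paper, but the core argument is the same.
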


\begin{proof}
Let us define a function $r(s)$ by 
\[r(s) = \int_0^s \frac{1}{\sqrt{1+t^2}} \, dt - \int_s^\infty \Big ( \frac{1}{\sqrt{1+t^2-mt^{2-n}}}-\frac{1}{\sqrt{1+t^2}} \Big ) \, dt.\] 
Moreover, let $\lambda$ be the inverse of the function $r(s)$, so that $\lambda(r(s)) = s$. With this understood, the metric $g$ can be written as $g = dr \otimes dr + \lambda(r)^2 \, g_{S^{n-1}}$. Furthermore, it is clear that $\lambda(r)$ satisfies the ODE \eqref{lambda'}. Finally, we have 
\begin{align*}
r(s) &= \int_0^s \frac{1}{\sqrt{1+t^2}} \, dt - \int_s^\infty \Big ( \frac{m}{2} \, t^{-n-1}+O(t^{-n-3}) \Big ) \, dt \\
&= \text{\rm arsinh}(s) - \frac{m}{2n} \, s^{-n} + O(s^{-n-2}).
\end{align*}
Hence, by Taylor expansion, we have
\begin{align*}
\sinh(r(s))
&= s - \frac{m}{2n} \, s^{1-n} + O(s^{-n-1}) \\
&= s - \frac{m}{2n} \, \sinh^{1-n}(r(s)) + O(\sinh^{-n-1}(r(s))).
\end{align*} From this, the assertion follows.
\end{proof}

We calculate the asymptotic expansion of Riemannian curvature tensors in the next lemma. Let $\theta=\{\theta^j\}_{j=1,2,\dots,n-1}$ be a local coordinate system on $S^{n-1}$ and let $\partial_{\theta^j}$ be the corresponding coordinate vector fields in $M$. 

\begin{lemma}
\label{curv_expansion}
Let $e_\alpha,\ \alpha=1,2,\dots,n$ be an orthonormal frame and let $R_{\alpha\beta\gamma\mu}$ denote the Riemannian curvature tensor of the AdS-Schwarzschild metric. Then
\begin{equation}
\label{riem.1}
R_{\alpha\beta\gamma\mu} = -\delta_{\beta\mu}\delta_{\alpha\gamma}+\delta_{\beta\gamma}\delta_{\alpha\mu}+O(e^{-nr})
\end{equation}
and
\begin{equation}
\label{riem.2}
\bar{\nabla}_\rho R_{\alpha\beta\gamma\mu}=O(e^{-nr}).
\end{equation}
Moreover, the Ricci tensor satisfies
\[\text{\rm Ric}(\partial_r,\partial_r)=-(n-1)-m \, \frac{(n-1)(n-2)}{2} \, \sinh^{-n}(r)+O(e^{-(n+2)r})\]
and
\[\lambda^{-2} \, \text{\rm Ric}(\partial_{\theta^i},\partial_{\theta^j})=\Big ( -(n-1)+m \, \frac{n-2}{2} \, \sinh^{-n}(r) \Big ) \, \sigma_{ij}+O(e^{-(n+2)r}),\]
where $\sigma_{ij} = g_{S^{n-1}}(\partial_{\theta^i},\partial_{\theta^j})$.
\end{lemma}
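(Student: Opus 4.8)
The plan is to prove Lemma~\ref{curv_expansion} by direct computation, exploiting the warped product structure $\bar g = dr\otimes dr + \lambda(r)^2\,g_{S^{n-1}}$ together with the asymptotic expansion of $\lambda$ from the previous lemma. For a warped product of this type, the curvature tensor is classical: writing $e_n = \partial_r$ and letting $e_1,\dots,e_{n-1}$ be an orthonormal frame tangent to the $S^{n-1}$ factor (so $e_i = \lambda^{-1}\hat e_i$ with $\hat e_i$ orthonormal for $g_{S^{n-1}}$), one has the ``radial'' sectional curvatures $R_{inin} = -\lambda''/\lambda$ and the ``spherical'' ones $R_{ijij} = (1-(\lambda')^2)/\lambda^2$ for $i\neq j$, with all other components vanishing (because $S^{n-1}$ has constant curvature $1$). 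So the first step is to record these formulas and then feed in the asymptotics.

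From \eqref{lambda'} we have $(\lambda')^2 = 1+\lambda^2 - m\lambda^{2-n}$, hence $\lambda'' = \lambda - \tfrac{2-n}{2}\,m\,\lambda^{1-n} = \lambda + \tfrac{n-2}{2}\,m\,\lambda^{1-n}$ after differentiating. Therefore
\[
-\frac{\lambda''}{\lambda} = -1 - \frac{n-2}{2}\,m\,\lambda^{-n},
\qquad
\frac{1-(\lambda')^2}{\lambda^2} = \frac{-\lambda^2 + m\,\lambda^{2-n}}{\lambda^2} = -1 + m\,\lambda^{-n}.
\]
Now substitute the expansion $\lambda = \sinh r + \tfrac{m}{2n}\sinh^{1-n}r + O(\sinh^{-n-1}r)$; since $\lambda = \sinh r\,(1+O(\sinh^{-n}r))$, we get $\lambda^{-n} = \sinh^{-n}r + O(\sinh^{-2n}r)$, and $\sinh^{-n}r = O(e^{-nr})$. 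This gives $R_{inin} = -1 + O(e^{-nr})$ and $R_{ijij} = -1 + O(e^{-nr})$, which is exactly \eqref{riem.1} once one checks that the full tensor $R_{\alpha\beta\gamma\mu}$ differs from the constant-curvature model $-\delta_{\beta\mu}\delta_{\alpha\gamma}+\delta_{\beta\gamma}\delta_{\alpha\mu}$ only in these components (the model tensor reproduces $R_{ijij}=R_{inin}=-1$ and all admissible symmetries, so the difference has entries $O(e^{-nr})$ in an orthonormal frame). For \eqref{riem.2}, note that $R_{\alpha\beta\gamma\mu} - (\text{const curv model})$ is, in the orthonormal frame, a function of $r$ alone times algebraic combinations of $\sigma$; its covariant derivative in the radial direction is the $r$-derivative of these $O(e^{-nr})$ coefficients, which is again $O(e^{-nr})$, while covariant derivatives in spherical directions bring in Christoffel symbols $\lambda'/\lambda = 1+O(e^{-nr})$ times the same $O(e^{-nr})$ coefficients; all terms are $O(e^{-nr})$.

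For the Ricci tensor one simply traces: $\mathrm{Ric}(\partial_r,\partial_r) = \sum_{i=1}^{n-1} R_{inin} = (n-1)\big(-\lambda''/\lambda\big) = -(n-1) - m\,\tfrac{(n-1)(n-2)}{2}\,\lambda^{-n}$, and then replacing $\lambda^{-n}$ by $\sinh^{-n}r + O(\sinh^{-2n}r) = \sinh^{-n}r + O(e^{-2nr})$ — so actually the error is even better than claimed, $O(e^{-2nr})$, which certainly is $O(e^{-(n+2)r})$. Similarly, for a fixed spherical direction $e_i$, $\mathrm{Ric}(e_i,e_i) = R_{inin} + \sum_{j\neq i} R_{ijij} = -\lambda''/\lambda + (n-2)\,(1-(\lambda')^2)/\lambda^2 = \big(-1 - \tfrac{n-2}{2}m\lambda^{-n}\big) + (n-2)\big(-1 + m\lambda^{-n}\big) = -(n-1) + \tfrac{n-2}{2}\,m\,\lambda^{-n}$; multiplying by $\lambda^2$ and using $\partial_{\theta^i} = \lambda\,e_i$ up to the metric $\sigma_{ij}$ gives the stated formula, again with error $O(e^{-2nr})$ after expanding $\lambda^{-n}$. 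I do not expect a genuine obstacle here; the only thing requiring a little care is making the expansion of $\lambda^{-n}$ precise enough — but since the desired error terms ($O(e^{-nr})$ for the curvature, $O(e^{-(n+2)r})$ for Ricci) are weaker than what the computation actually yields, the bookkeeping is comfortable. The one point worth stating explicitly is why $\bar\nabla_\rho R = O(e^{-nr})$ for \emph{all} directions $\rho$, not just the radial one: this follows because the relevant connection coefficients $\lambda'/\lambda$ are bounded, so differentiating exponentially small coefficients keeps them exponentially small.
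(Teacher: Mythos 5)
Your proof is correct and follows essentially the same route as the paper: both reduce the problem to the warped-product sectional curvatures $R_{inin}=-\lambda''/\lambda$ and $R_{ijij}=(1-(\lambda')^2)/\lambda^2$ (the paper derives these explicitly from the Gauss and Codazzi equations together with a direct computation of $R(\partial_{\theta^i},\partial_r,\partial_{\theta^j},\partial_r)$, whereas you cite them as classical), then use the ODE $(\lambda')^2=1+\lambda^2-m\lambda^{2-n}$ and the asymptotic expansion of $\lambda$. The one place you genuinely diverge is the spherical Ricci component: the paper obtains $\text{\rm Ric}(\partial_{\theta^i},\partial_{\theta^j})$ indirectly by invoking the known fact that the scalar curvature is $-n(n-1)$, while you compute the trace $\text{\rm Ric}(e_i,e_i)=-\lambda''/\lambda+(n-2)(1-(\lambda')^2)/\lambda^2$ directly from the sectional curvatures, which is slightly more self-contained and produces the same formula (your observation that the error is in fact $O(e^{-2nr})$, stronger than the stated $O(e^{-(n+2)r})$, is also correct). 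Your sketch for \eqref{riem.2} fills in a justification the paper leaves as ``follows easily,'' and it is sound once one notes that the constant-curvature model tensor is parallel so that $\bar\nabla R=\bar\nabla(R-\text{model})$, and the orthonormal-frame connection coefficients of the AdS-Schwarzschild metric stay bounded.
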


\begin{proof} 
Each level set of the function $r$ is a round sphere with induced metric $\lambda(r)^2 \, g_{S^{n-1}}$ and second fundamental form $\lambda(r) \, \lambda'(r) \, g_{S^{n-1}}$. Applying the Gauss equation, we compute
\[R(\partial_{\theta^i},\partial_{\theta^j},\partial_{\theta^k},\partial_{\theta^l}) =\lambda(r)^2 \, (1-\lambda'(r)^2) \, (\sigma_{ik}\sigma_{jl}-\sigma_{il}\sigma_{jk}).\]
Since each level set of $r$ is umbilic, from the Codazzi equation, we derive
\[R(\partial_{\theta^i},\partial_{\theta^j},\partial_{\theta^k},\partial_r)=0 .\]
The remaining components of the curvature tensors are
\begin{align*}
R(\partial_{\theta^i},\partial_r,\partial_{\theta^j},\partial_r)
&= \langle (\bar{\nabla}_{\partial_{\theta^i}} \bar{\nabla}_{\partial_r}-\bar{\nabla}_{\partial_r} \bar{\nabla}_{\partial_{\theta^i}})\partial_r,\partial_{\theta^j} \rangle \\
&= -\langle \bar{\nabla}_{\partial_r} \bar{\nabla}_{\partial_\theta^i} \partial_r,\partial_{\theta^j} \rangle \\
&= -\Big \langle \bar{\nabla}_{\partial_r} \Big ( \frac{\lambda'}{\lambda} \, \partial_{\theta^i} \Big ),\partial_{\theta^j} \Big \rangle \\
&=-\lambda(r) \, \lambda''(r) \, \sigma_{ij}.
\end{align*} From this, \eqref{riem.1} and \eqref{riem.2} follow easily.

Moreover, we have
\begin{align*} 
\text{\rm Ric}(\partial_r,\partial_r) 
&= -(n-1) \, \frac{\lambda''(r)}{\lambda(r)} \\ 
&= -(n-1) - m \, \frac{(n-1)(n-2)}{2} \, \sinh^{-n}(r)+O(e^{-(n+2)r}). 
\end{align*}
As the scalar curvature is equal to $-n(n-1)$, the expression of $\text{\rm Ric}(\partial_{\theta^i},\partial_{\theta^j})$ follows.
\end{proof}

A star-shaped hypersurface $\Sigma\subset M$ can be parametrized by
\[\Sigma = \{(r(\theta),\theta):\ \theta\in S^{n-1}\}\]
for a smooth function $r$ on $S^{n-1}$. We next define a new function $\varphi: S^{n-1} \to \mathbb{R}$ by
\[\varphi(\theta) = \Phi(r(\theta)),\]
where $\Phi(r)$ is a positive function satisfying $\Phi'(r) = \frac{1}{\lambda(r)}$.

Let $\varphi_i=\nabla_i\varphi$, $\varphi_{ij}=\nabla_j\nabla_i\varphi$, and $\varphi_{ijk}=\nabla_k\nabla_j\nabla_i \varphi$ denote the covariant derivatives of $\varphi$ with respect to the round metric $g_{S^{n-1}}$. Moreover, let
\begin{equation}
\label{def_rho}
\rho = \sqrt{1+|\nabla\varphi|_{S^{n-1}}^2}. 
\end{equation}
In the next lemma, we express the metric and second fundamental form of $\Sigma$ in terms of covariant derivatives of $\varphi$ as in \cite{Ding}:

\begin{proposition}
\label{induced_metric}
Let $g_{ij}$ be the induced metric on $\Sigma$ and $h_{ij}$ be the second fundamental form in term of the coordinates $\theta^j$. Then
\[g_{ij}=\lambda^2 \, (\sigma_{ij}+\varphi_i\varphi_j)\]
and
\[h_{ij}=\frac{\lambda}{\rho} \, \big ( \lambda' \, (\sigma_{ij}+\varphi_i\varphi_j) - \varphi_{ij} \big ).\]
\end{proposition}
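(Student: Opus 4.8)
The plan is to carry out the computation directly in the $\theta$-coordinates, as in \cite{Ding}. Writing the metric as $\bar g = dr\otimes dr + \lambda^2\, g_{S^{n-1}}$ and parametrizing $\Sigma$ by $F(\theta) = (r(\theta),\theta)$, a basis of the tangent space of $\Sigma$ at $F(\theta)$ is given by $X_i = \partial_{\theta^i} + r_i\,\partial_r$, where $r_i = \partial_{\theta^i} r$. Since $\varphi = \Phi(r)$ with $\Phi'(r) = \lambda(r)^{-1}$, the chain rule gives $\varphi_i = \lambda^{-1}\,r_i$, so that $r_i = \lambda\,\varphi_i$ and $\partial_{\theta^i}\lambda = \lambda\lambda'\,\varphi_i$; I would record these relations at the outset and use them to eliminate $r$ in favor of $\varphi$ throughout.

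First I would compute the induced metric: $g_{ij} = \bar g(X_i,X_j) = \lambda^2\,\sigma_{ij} + r_i r_j = \lambda^2(\sigma_{ij}+\varphi_i\varphi_j)$, which is the first assertion. Next I would determine the outward unit normal via an ansatz $\nu = a\,\partial_r + b^k\,\partial_{\theta^k}$: the conditions $\bar g(\nu,X_i)=0$ force $b^k = -a\,\lambda^{-2}\sigma^{kl}r_l = -a\,\lambda^{-1}\sigma^{kl}\varphi_l$, and imposing $\bar g(\nu,\nu)=1$ together with $\sigma^{kl}r_k r_l = \lambda^2\,|\nabla\varphi|_{g_{S^{n-1}}}^2$ gives $a^2\rho^2 = 1$. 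Hence $\nu = \rho^{-1}\big(\partial_r - \lambda^{-1}\sigma^{kl}\varphi_l\,\partial_{\theta^k}\big)$.

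The substantial step is the second fundamental form, which I would obtain from $h_{ij} = -\,\bar g(\bar\nabla_{X_i}X_j,\nu)$, the sign chosen so that coordinate spheres have positive $h_{ij}$. The nonzero Christoffel symbols of $\bar g$ are $\bar\Gamma^r_{ij} = -\lambda\lambda'\,\sigma_{ij}$, $\bar\Gamma^k_{ir} = \tfrac{\lambda'}{\lambda}\,\delta^k_i$, and $\bar\Gamma^k_{ij} = \Gamma^k_{ij}$, the Christoffel symbols of $g_{S^{n-1}}$. Expanding $\bar\nabla_{X_i}X_j$ with these and substituting $r_i = \lambda\varphi_i$, $\partial_{\theta^i}\lambda = \lambda\lambda'\varphi_i$, one finds that its $\partial_r$-component equals $\lambda\,\partial^2_{ij}\varphi + \lambda\lambda'\varphi_i\varphi_j - \lambda\lambda'\sigma_{ij}$, while the contraction of its spherical part against the tangential part of $\nu$ (using $\sigma^{kl}\varphi_l\sigma_{ki} = \varphi_i$) contributes $-\rho^{-1}\lambda\big(\Gamma^k_{ij}\varphi_k + 2\lambda'\varphi_i\varphi_j\big)$. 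Adding the two and noticing that the round Christoffel term combines with $\partial^2_{ij}\varphi$ to form the covariant Hessian $\varphi_{ij} = \partial^2_{ij}\varphi - \Gamma^k_{ij}\varphi_k$, one arrives at
\[\bar g(\bar\nabla_{X_i}X_j,\nu) = \frac{\lambda}{\rho}\big(\varphi_{ij} - \lambda'(\sigma_{ij}+\varphi_i\varphi_j)\big),\]
which is equivalent to the claimed formula for $h_{ij}$.

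I expect the main obstacle to be purely bookkeeping: one must handle the non-tensorial quantities $\partial^2_{ij}r$ and $\partial^2_{ij}\varphi$ consistently and check that the round Christoffel symbols produced by $\bar\nabla_{X_i}X_j$ are exactly those needed to reassemble $\partial^2_{ij}\varphi$ into the tensor $\varphi_{ij}$ (and likewise cancel in the tangential contraction). As a sanity check I would specialize to $\varphi$ constant, where $\rho = 1$ and the formulas reduce to $g_{ij} = \lambda^2\sigma_{ij}$ and $h_{ij} = \lambda\lambda'\sigma_{ij}$, matching the umbilic second fundamental form $\lambda\lambda'\,g_{S^{n-1}}$ of the level sets of $r$.
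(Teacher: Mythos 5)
Your proposal is correct and follows essentially the same route as the paper: compute the tangent basis $X_i=\partial_{\theta^i}+r_i\partial_r$, solve for the unit normal, and evaluate $h_{ij}=-\bar g(\bar\nabla_{X_i}X_j,\nu)$ using the warped-product Christoffel symbols together with the chain rule $r_i=\lambda\varphi_i$. The only cosmetic difference is that the paper carries the covariant Hessian $r_{ij}$ through to the end before converting to $\varphi_{ij}$, while you expand in raw second derivatives and round Christoffel symbols and switch to $\varphi$ earlier; the computations are the same.
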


\begin{proof}
A basis of tangent vector fields of $\Sigma$ is of the form $r_j\partial_r+\partial_{\theta^j}$. We compute
\begin{align*}
g_{ij} &= \langle r_i\partial_r+\partial_{\theta^i},r_j\partial_r+\partial_{\theta^j} \rangle \\
&= \lambda^2(r)\sigma_{ij}+r_ir_j \\
&=\lambda^2(r)(\sigma_{ij}+\varphi_i\varphi_j).
\end{align*}
The unit normal vector $\nu$ is given by
\[\nu = \frac{1}{\rho} \, \Big ( \partial_r-\frac{r^j}{\lambda^2}\partial_{\theta^j} \Big ). \]
Thus, the second fundamental form is given by
\begin{align*}
h_{ij}
&= -\big \langle \bar{\nabla}_{r_i\partial_r+\partial_{\theta^i}} (r_j\partial_r+\partial_{\theta^j}),\nu \big \rangle\\
&= -\Big \langle (r_{ij}-\lambda\lambda' \, \sigma_{ij}) \, \partial_r + \frac{\lambda'}{\lambda} \, r_j \, \partial_{\theta^i} + \frac{\lambda'}{\lambda} \, r_i \, \partial_{\theta^j},\nu \Big \rangle \\
&= \frac{1}{\rho} \, \Big ( \lambda\lambda' \, \sigma_{ij}+\frac{2\lambda'}{\lambda} \, r_i \, r_j-r_{ij} \Big )\\
&= \frac{\lambda}{\rho} \, \big ( \lambda' \, (\sigma_{ij}+\varphi_i\varphi_j)-\varphi_{ij} \big ),
\end{align*}
where $\bar{\nabla}$ denotes the Levi-Civita connection in the ambient AdS-Schwarzschild manifold.
\end{proof}

\section{The inverse mean curvature flow}

Let $\Sigma_0$ be a mean convex star-shaped hypersurface in $M$ which is given by an embedding
\[F_0:S^{n-1} \to M\]
Let $F_t:S^{n-1}\to M$, $t\in [0,T)$, be the solution of inverse mean curvature flow with initial data given by $F_0$. In other words,
\begin{equation}
\label{para_imcf}
\frac{\partial F}{\partial t} = \frac{1}{H} \, \nu,
\end{equation}
where $\nu$ is the unit outer normal vector and $H$ is the mean curvature. We shall call \eqref{para_imcf} the parametric form of the flow. 

We can write the initial hypersurface $\Sigma_0$ as the graph of a function $\tilde{r}_0$ defined on the unit sphere:
\[\Sigma_0=\{(\tilde{r}_0(\theta),\theta): \theta\in S^{n-1}\}.\]
If each $\Sigma_t$ is star-shaped, it can be parametrized them as the graph
\[\Sigma_t = \{(\tilde{r}(\theta,t),\theta): \theta\in S^{n-1}\}. \]
In this case, the inverse mean curvature flow can be written as a parabolic PDE for $\tilde{r}$. As long as the solution of \eqref{para_imcf} exists and remains star-shaped, it is equivalent to
\begin{equation}
\label{non_para_imcf}
\frac{\partial \tilde{r}}{\partial t} = \frac{\rho}{H},
\end{equation}
where $\rho$ is given by \eqref{def_rho}.

The equation \eqref{non_para_imcf} will be referred as the non-parametric form of the inverse mean curvature flow. Notice that the velocity vector of \eqref{para_imcf} is always normal, while the velocity vector of \eqref{non_para_imcf} is in the direction of $\partial_r$. To go from one to the other, we take the difference which is a (time-dependent) tangential vector field and compose the flow of the reparametrization associated with the tangent vector field.

Notice that associated with $\tilde{r}$, we define
\[\varphi(\theta,t) := \Phi(\tilde{r}(\theta,t)),\]
where $\Phi(r)$ is a positive function satisfying $\Phi'(r) = \frac{1}{\lambda(r)}$. Then $\varphi$ satisfies
\begin{equation}
\label{evol_varphi}
\frac{\partial \varphi}{\partial t} = \frac{\rho}{\lambda H}.
\end{equation}
In the sequel, we use the non-parametric form to derive $C^0$ and $C^1$ estimates of $\tilde{r}$. Some of theses estimates can be found in \cite{Ding} or \cite{Gerhardt2} (see also \cite{Gerhardt1}). For completeness, we derive all the estimates here.

\begin{lemma}
\label{bounds.for.lambda}
Let $\overline{r}(t)=\sup_{S^{n-1}} \tilde{r}(\cdot,t)$ and $\underline{r}(t) = \inf_{S^{n-1}} \tilde{r}(\cdot,t)$. Then
\[\lambda(\overline{r}(t)) \leq e^{\frac{1}{n-1} \, t} \, \lambda(\overline{r}(0))\]
and
\[\lambda(\underline{r}(t)) \geq e^{\frac{1}{n-1} \, t} \, \lambda(\underline{r}(0)).\]
\end{lemma}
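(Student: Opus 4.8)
The plan is to run a maximum principle argument on the height function, using the non-parametric form \eqref{non_para_imcf} (equivalently \eqref{evol_varphi}) on the interval $[0,T)$ on which the flow exists as a star-shaped mean convex graph. The key observation is this: at a point $\theta_0 \in S^{n-1}$ where $\tilde r(\cdot,t)$ attains its maximum --- equivalently, where $\varphi(\cdot,t) = \Phi(\tilde r(\cdot,t))$ attains its maximum, since $\Phi$ is increasing --- we have $\nabla\varphi(\theta_0) = 0$, so $\rho = 1$ there by \eqref{def_rho}, and $\nabla^2\varphi(\theta_0) \le 0$. Substituting $\varphi_i = 0$ and $\rho = 1$ into Proposition \ref{induced_metric} gives $g_{ij} = \lambda^2\sigma_{ij}$ and $h_{ij} = \lambda\,(\lambda'\sigma_{ij} - \varphi_{ij}) \ge \lambda\lambda'\sigma_{ij}$ at $\theta_0$, so tracing with $g^{ij} = \lambda^{-2}\sigma^{ij}$ yields $H(\theta_0) \ge (n-1)\,\lambda'(\overline r(t))/\lambda(\overline r(t)) > 0$. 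Symmetrically, at a point where $\tilde r(\cdot,t)$ attains its minimum we have $\nabla\varphi = 0$ and $\nabla^2\varphi \ge 0$, hence $\rho = 1$ and $H \le (n-1)\,\lambda'(\underline r(t))/\lambda(\underline r(t))$ there.

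I would then invoke Hamilton's trick: the function $\overline r(t) = \sup_{S^{n-1}} \tilde r(\cdot,t)$ is locally Lipschitz in $t$, and for a.e. $t$ one has $\frac{d}{dt}\overline r(t) = \frac{\partial \tilde r}{\partial t}(\theta_t,t)$ for any $\theta_t$ realizing the supremum. Combining this with \eqref{non_para_imcf}, with $\rho = 1$ at $\theta_t$, and with the lower bound on $H$ above, we get $\frac{d}{dt}\overline r(t) = 1/H(\theta_t) \le \lambda(\overline r(t))/\big((n-1)\,\lambda'(\overline r(t))\big)$ for a.e. $t$. Since $\lambda' > 0$ by \eqref{lambda'} and $\lambda$ is increasing, the chain rule gives $\frac{d}{dt}\lambda(\overline r(t)) = \lambda'(\overline r(t))\,\frac{d}{dt}\overline r(t) \le \frac{1}{n-1}\,\lambda(\overline r(t))$, and integrating yields $\lambda(\overline r(t)) \le e^{\frac{1}{n-1}t}\,\lambda(\overline r(0))$. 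The lower bound for $\lambda(\underline r(t))$ follows in the same way with all inequalities reversed: $\frac{d}{dt}\underline r(t) = 1/H(\theta_t) \ge \lambda(\underline r(t))/\big((n-1)\,\lambda'(\underline r(t))\big)$, hence $\frac{d}{dt}\lambda(\underline r(t)) \ge \frac{1}{n-1}\,\lambda(\underline r(t))$ and $\lambda(\underline r(t)) \ge e^{\frac{1}{n-1}t}\,\lambda(\underline r(0))$.

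The computation is short, and I do not expect a genuine obstacle in this lemma. The only points requiring care are the Lipschitz regularity of $\overline r$ and $\underline r$ in $t$ together with the validity of Hamilton's trick, and the tacit assumption that the flow remains a star-shaped mean convex graph so that \eqref{non_para_imcf} is valid and $1/H$ is finite --- but mean convexity is not really needed at the extreme points, since there $H$ is controlled directly, from below at the maximum and from above at the minimum, by the manifestly positive quantity $(n-1)\lambda'/\lambda$. The genuinely delicate estimates of the program --- the roundness bound $|h_i^j - \delta_i^j| \le O(t^2 e^{-\frac{2}{n-1}t})$ and the analysis of the limit of $Q(t)$ --- enter only in later lemmas, not here.
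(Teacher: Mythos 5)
Your proof is correct and follows essentially the same route as the paper's: at the spatial maximum you use $\rho = 1$ and $\nabla^2\varphi \le 0$ to obtain $H \ge (n-1)\lambda'/\lambda$, convert the pointwise bound into the differential inequality $\frac{d}{dt}\lambda(\overline r(t)) \le \frac{1}{n-1}\lambda(\overline r(t))$, integrate, and argue symmetrically at the minimum. The only cosmetic difference is that you derive the mean curvature bound by passing through Proposition \ref{induced_metric} and tracing $h_{ij}$ while the paper quotes the formula $H = \frac{(n-1)\lambda'}{\lambda\rho} - \frac{\tilde\sigma^{ij}\varphi_{ij}}{\lambda\rho}$ directly, and you are more explicit about the Hamilton-trick justification, which the paper leaves tacit.
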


\begin{proof}
Recall that
\[\frac{\partial \tilde{r}}{\partial t} = \frac{\rho}{H}.\]
Moreover, we have
\[H = \frac{(n-1)\lambda'}{\lambda \rho}-\frac{\tilde{\sigma}^{ij}}{\lambda \rho} \, \varphi_{ij},\]
where $\tilde{\sigma}^{ij}=\sigma^{ij}-\frac{\varphi^i\varphi^j}{\rho^2}$. At the point where the function $\tilde{r}(\cdot,t)$ attains its maximum, we have $H \geq \frac{(n-1)\lambda'}{\lambda}$. This implies
\[\frac{d}{dt} \overline{r}(t) \leq \frac{\lambda(\overline{r}(t))}{(n-1)\lambda'(\overline{r}(t))},\]
hence
\[\frac{d}{dt} \lambda(\overline{r}(t)) \leq \frac{\lambda(\overline{r}(t))}{n-1}.\] From this, the first statement follows. The second statement follows similarly.
\end{proof}

\begin{proposition}
\label{upper.bound.H}
We have $H \leq n-1+O(e^{-\frac{2}{n-1} \, t})$.
\end{proposition}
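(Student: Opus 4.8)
The plan is to derive an evolution inequality for the mean curvature $H$ along the inverse mean curvature flow and apply the maximum principle, comparing against the explicit solution coming from coordinate spheres. First I would recall the standard evolution equation for $H$ under inverse mean curvature flow in a general ambient manifold: writing the flow speed as $\frac{1}{H}$, one has
\[
\frac{\partial H}{\partial t} = \frac{\Delta_{\Sigma_t} H}{H^2} - \frac{2|\nabla H|^2}{H^3} - \frac{|h|^2 + \overline{\text{Ric}}(\nu,\nu)}{H},
\]
where $\Delta_{\Sigma_t}$ is the Laplacian on $\Sigma_t$ and $|h|^2$ is the squared norm of the second fundamental form. By Cauchy–Schwarz, $|h|^2 \geq \frac{H^2}{n-1}$, and from Lemma \ref{curv_expansion} we have $\overline{\text{Ric}}(\nu,\nu) = -(n-1) + O(e^{-nr})$; moreover Lemma \ref{bounds.for.lambda} gives $\lambda(\underline{r}(t)) \geq e^{\frac{1}{n-1}t}\lambda(\underline{r}(0))$, so $\underline{r}(t) \to \infty$ and hence $e^{-nr} = O(e^{-\frac{n}{n-1}t})$ uniformly on $\Sigma_t$. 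Therefore at an interior spatial maximum of $H$ (for each fixed $t$) the gradient terms drop and we obtain
\[
\frac{d}{dt}\Big(\max_{\Sigma_t} H\Big) \leq -\frac{1}{n-1}\,\max_{\Sigma_t} H + (n-1)\,\frac{1}{\max_{\Sigma_t} H} + O(e^{-\frac{n}{n-1}t}).
\]

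Next I would analyze this differential inequality. The key observation is that the function $\psi(H) = -\frac{H}{n-1} + \frac{n-1}{H}$ vanishes at $H = n-1$ and is strictly decreasing in $H$ near that value, so $H = n-1$ is a stable equilibrium of the ODE comparison problem $\frac{dy}{dt} = -\frac{y}{n-1} + \frac{n-1}{y}$. Writing $H = n-1 + u$, one linearizes to get $\frac{du}{dt} \leq -\frac{2}{n-1}u + (\text{lower order}) + O(e^{-\frac{n}{n-1}t})$, and since $\frac{n}{n-1} > \frac{2}{n-1}$ the forcing term is negligible compared to the decay rate. A Gronwall/comparison argument then yields $\max_{\Sigma_t} H \leq n-1 + O(e^{-\frac{2}{n-1}t})$, which is exactly the claim. (One should be a little careful that $u$ could a priori be negative; but the same differential inequality, run from above, controls the positive part of $u$, which is what the proposition asserts — it is only an upper bound on $H$.)

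The main obstacle I anticipate is twofold. First, making the maximum-principle argument rigorous: $\max_{\Sigma_t} H$ is only Lipschitz in $t$, so one works with the standard Hamilton trick (differentiate in the barrier sense, or use a smooth test function), and one must also know a priori that $H > 0$ is preserved so that the flow is well-defined and the quantities $\frac{1}{H}$, $\frac{1}{H^3}$ make sense — this mean-convexity preservation itself follows from the same evolution equation since at a spatial minimum of $H$ the reaction term $-\frac{|h|^2 + \overline{\text{Ric}}(\nu,\nu)}{H}$ need not have a sign, so one actually needs the star-shapedness and the explicit structure of the ambient metric to rule out $H \to 0$; this is presumably handled in the long-time-existence part of the paper that precedes this proposition. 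Second, controlling the error term $O(e^{-nr})$ in $\overline{\text{Ric}}(\nu,\nu)$ uniformly: one needs the lower bound on $\underline{r}(t)$ from Lemma \ref{bounds.for.lambda} to convert the spatial $O(e^{-nr})$ decay into a uniform temporal $O(e^{-\frac{n}{n-1}t})$ decay, and to check that the $|h|^2 \geq \frac{H^2}{n-1}$ inequality is enough (it is, since we only need an upper bound on $H$ and Cauchy–Schwarz goes the right way).
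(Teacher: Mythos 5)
Your strategy --- the evolution equation for $H$ under inverse mean curvature flow, the Cauchy--Schwarz bound $|A|^2 \geq \frac{H^2}{n-1}$, the Ricci asymptotics from Lemma \ref{curv_expansion} converted via Lemma \ref{bounds.for.lambda} into a uniform $O(e^{-\frac{n}{n-1}t})$ error, then the parabolic maximum principle --- is exactly the paper's. The one place you diverge is the ODE step, and there is a small gap there. You obtain
\[
\frac{d}{dt}\max_{\Sigma_t} H \leq -\frac{\max H}{n-1} + \frac{n-1}{\max H} + O(e^{-\frac{n}{n-1}t})
\]
and linearize around $\max H = n-1$; but the error term is really $\frac{1}{\max H}\,O(e^{-\frac{n}{n-1}t})$, so your Gronwall comparison requires a uniform lower bound on $\max H$. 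You acknowledge this and defer it to ``the long-time-existence part of the paper that precedes this proposition,'' but in the paper the uniform lower bound for $H$ (Corollary \ref{lower.bound.for.H}) comes \emph{after} Proposition \ref{upper.bound.H} and uses it as an input, so that deferral does not work. The paper sidesteps the issue by multiplying the maximum-principle inequality by $2H$ and working with $H_{\max}^2$, which gives the linear ODE
\[
\frac{d}{dt} H_{\max}^2 \leq -\frac{2}{n-1}\,H_{\max}^2 + 2(n-1) + O(e^{-\frac{n}{n-1}t}).
\]
Multiplying by $H$ simultaneously cancels the $\frac{1}{H}$ in the error term and linearizes the reaction, so $H_{\max}^2 \leq (n-1)^2 + O(e^{-\frac{2}{n-1}t})$ follows immediately by Gronwall, with no lower bound on $H$ needed. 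Your argument can be repaired without this trick by noting that the assertion is vacuous when $\max H < n-1$, and in the complementary regime $\max H \geq n-1$ one has $\frac{1}{\max H} \leq \frac{1}{n-1}$ automatically; but the $H^2$ substitution is cleaner and is worth remembering as a standard device in such estimates.
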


\begin{proof}
We work with the parametric formulation. The evolution of the mean curvature is given by
\[\frac{\partial H}{\partial t} = \frac{\Delta H}{H^2} - 2 \, \frac{|\nabla H|^2}{H^3} - \frac{|A|^2}{H} - \frac{\text{\rm Ric}(\nu,\nu)}{H}.\] 
Note that $|\text{\rm Ric} + (n-1) \, g| \leq O(e^{-\frac{n}{n-1} \, t})$ on $\Sigma_t$. This gives 
\begin{equation}
\label{eq_H}
\frac{\partial H}{\partial t} = \frac{\Delta H}{H^2} - 2 \, \frac{|\nabla H|^2}{H^3} - \frac{|A|^2}{H} + \frac{n-1}{H} + \frac{1}{H} \, O(e^{-\frac{n}{n-1} \, t}).
\end{equation}
Using \eqref{eq_H} and the inequality $|A|^2 \geq \frac{1}{n-1} \, H^2$, we obtain
\[\frac{d}{dt} H_{\text{\rm max}}^2 \leq -\frac{2}{n-1} \, H_{\text{\rm max}}^2 + 2(n-1) + O(e^{-\frac{n}{n-1} \, t}).\] This implies
\[H_{\text{\rm max}}(t)^2 \leq (n-1)^2 + O(e^{-\frac{2}{n-1} \, t}).\] From this, the assertion follows easily.
\end{proof}

We next establish a gradient bound for the function $\varphi$. For abbreviation, we define 
\[F = \frac{\lambda H}{\rho} = \frac{(n-1)\lambda'-\tilde{\sigma}^{ij}\varphi_{ij}}{\rho^2}\] 
and 
\[G_k = F \, \varphi_k-\frac{1}{\rho^2} \, \varphi^i \, \varphi_{ik} + \frac{1}{\rho^4} \, \varphi_k \, \varphi^i \, \varphi^j \, \varphi_{ij},\] 
where $\tilde{\sigma}^{ij}=\sigma^{ij}-\frac{\varphi^i\varphi^j}{\rho^2}$. Note that the variation of $F$ with respect to $\varphi_k$ is given by $-\frac{2}{\rho^2} \, G_k$.

\begin{proposition}
\label{grad.estimate.varphi}
We have $\sup_{S^{n-1}} |\nabla \varphi|_{g_{S^{n-1}}} = O(e^{-\frac{1}{n-1} \, t})$.
\end{proposition}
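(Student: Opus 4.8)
The plan is to derive a parabolic differential inequality for
\[w := \tfrac{1}{2}\,|\nabla\varphi|_{g_{S^{n-1}}}^2 = \tfrac{1}{2}\,(\rho^2-1)\]
(with $\rho$ as in \eqref{def_rho}) and to apply the maximum principle on $S^{n-1}$. Starting from the non-parametric form \eqref{evol_varphi}, written as $\frac{\partial\varphi}{\partial t}=\frac{1}{F}$ with $F=\frac{(n-1)\lambda'-\tilde\sigma^{ij}\varphi_{ij}}{\rho^2}$ and $\lambda^2 H^2=F^2\rho^2$, differentiating the equation gives $\frac{\partial\varphi_j}{\partial t}=-\frac{1}{F^2}\,\nabla_jF$ and hence $\frac{\partial w}{\partial t}=-\frac{1}{F^2}\,\varphi^j\,\nabla_jF$. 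Expanding $\nabla_jF$ by the chain rule, the dependence on $\varphi_{kl}$ enters through $\frac{\partial F}{\partial\varphi_{kl}}=-\frac{\tilde\sigma^{kl}}{\rho^2}$, the dependence on $\varphi_k$ through $\frac{\partial F}{\partial\varphi_k}=-\frac{2}{\rho^2}\,G_k$, and the explicit dependence on $r$ (through $\lambda'$) contributes $\frac{(n-1)\lambda''\lambda}{\rho^2}\,\varphi_j$, using $\frac{dr}{d\varphi}=\lambda$. Commuting the third covariant derivatives of $\varphi$ on the round sphere, whose curvature tensor is $\sigma_{ik}\sigma_{jl}-\sigma_{il}\sigma_{jk}$, yields $\varphi^j\varphi_{klj}=\nabla_k\nabla_l w-\sigma^{ab}\varphi_{ak}\varphi_{bl}-\varphi_k\varphi_l+2w\,\sigma_{kl}$; contracting with $\tilde\sigma^{kl}$ and using the identity $\rho^2=1+2w$ then gives the evolution equation
\begin{align*}
\frac{\partial w}{\partial t}={}&\frac{\tilde\sigma^{ij}}{\lambda^2 H^2}\,\nabla_i\nabla_j w-\frac{\tilde\sigma^{ij}\sigma^{ab}}{\lambda^2 H^2}\,\varphi_{ai}\varphi_{bj}+\frac{2}{\lambda^2 H^2}\,G_k\,\nabla_k w\\
&-\frac{2(n-1)\lambda''}{\lambda H^2}\,w+\frac{2(n-2)}{\lambda^2 H^2}\,w.
\end{align*}

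Next I would apply the maximum principle. At a point where $w(\cdot,t)$ attains its spatial maximum $w_{\max}(t)$, we have $\nabla w=0$ (so the $G_k$-term drops) and $\nabla^2 w\le 0$ (so the first term is $\le 0$), while the second term is $\le 0$ since $\tilde\sigma^{ij}$ and $\sigma^{ab}$ are positive definite. Hence
\[\frac{d}{dt}\,w_{\max}(t)\le \frac{1}{H^2}\Big(-\frac{2(n-1)\lambda''}{\lambda}+\frac{2(n-2)}{\lambda^2}\Big)\,w_{\max}(t).\]
From \eqref{lambda'} one computes $\frac{\lambda''}{\lambda}=1+\frac{m(n-2)}{2}\,\lambda^{-n}\ge 1$, and Lemma \ref{bounds.for.lambda} gives $\lambda\ge\lambda(\underline{r}(t))\ge e^{\frac{1}{n-1}t}\,\lambda(\underline{r}(0))$ everywhere on $\Sigma_t$, so $\lambda^{-2}=O(e^{-\frac{2}{n-1}t})$ and the bracket is $\le -2(n-1)+O(e^{-\frac{2}{n-1}t})<0$ for $t$ large. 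Since the bracket is negative and Proposition \ref{upper.bound.H} gives $H^2\le (n-1)^2+O(e^{-\frac{2}{n-1}t})$, it follows that
\[\frac{d}{dt}\,w_{\max}(t)\le \frac{-2(n-1)+O(e^{-\frac{2}{n-1}t})}{(n-1)^2+O(e^{-\frac{2}{n-1}t})}\,w_{\max}(t)=\Big(-\frac{2}{n-1}+O(e^{-\frac{2}{n-1}t})\Big)\,w_{\max}(t)\]
for $t$ large. As $\int_0^\infty O(e^{-\frac{2}{n-1}t})\,dt<\infty$, an ODE comparison argument gives $w_{\max}(t)\le C\,e^{-\frac{2}{n-1}t}$, that is, $\sup_{S^{n-1}}|\nabla\varphi|_{g_{S^{n-1}}}=O(e^{-\frac{1}{n-1}t})$.

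The bulk of the work, and the only delicate point, lies in the computation of the evolution equation for $w$: correctly commuting the covariant derivatives on $S^{n-1}$, collecting all lower-order terms, and carrying out the cancellations (repeatedly using $\rho^2=1+2w$) that reduce the zeroth-order part to exactly $\big(-\frac{2(n-1)\lambda''}{\lambda}+\frac{2(n-2)}{\lambda^2}\big)\frac{w}{H^2}$. It is worth emphasizing that no power of $\rho$ survives in the final equation; as a result the argument needs no a priori bound on $|\nabla\varphi|$ to get started, and --- because $\lambda''\ge\lambda$ keeps the bracket negative --- it does not even require a positive lower bound on $H$, since a small value of $H$ only makes the coefficient of $w_{\max}$ more negative and thus only accelerates the decay.
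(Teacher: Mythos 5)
Your approach is exactly the paper's: derive a parabolic inequality for $w=\tfrac12\,|\nabla\varphi|_{g_{S^{n-1}}}^2$ from the non-parametric equation $\partial_t\varphi=1/F$, commute covariant derivatives on $S^{n-1}$, and apply the maximum principle, using $\lambda''/\lambda\ge 1$ and the upper bound $H\le n-1+O(e^{-\frac{2}{n-1}t})$ to produce a decay rate $-\tfrac{2}{n-1}+O(e^{-\frac{2}{n-1}t})$.

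One computational point: your commutation identity has the wrong signs on the two zeroth-order terms. On the unit sphere the correct identity is
\[
\varphi^j\varphi_{klj}=\nabla_k\nabla_l w-\sigma^{ab}\varphi_{ak}\varphi_{bl}+\varphi_k\varphi_l-2w\,\sigma_{kl},
\]
not $-\varphi_k\varphi_l+2w\,\sigma_{kl}$ as you wrote. (A quick check: for $\varphi=\cos\theta$ on $S^2$ one has $\varphi_{ij}=-\varphi\,\sigma_{ij}$, hence $\varphi^j\varphi_{klj}=-2w\,\sigma_{kl}$, which your version fails to reproduce.) Consequently, after contracting with $\tilde\sigma^{kl}$ and using $\rho^2=1+2w$, the zeroth-order contribution from the curvature terms is $-2(n-2)\,w/(\rho^2F^2)$ rather than the $+2(n-2)\,w/(\lambda^2H^2)$ appearing in your displayed evolution equation; the paper records this as $-\frac{2(n-2)}{\rho^2F^2}\,\omega$. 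Your argument survives the error only because you happen to treat this term as a negligible $O(e^{-\frac{2}{n-1}t})$ perturbation using $\lambda^{-2}=O(e^{-\frac{2}{n-1}t})$ from Lemma~\ref{bounds.for.lambda}; with the correct sign it is a favorable term that simply strengthens the inequality and removes any concern about its sign at early times. Your remark that no power of $\rho$ survives is correct and a nice observation (it is why the estimate needs no a priori gradient bound), and it agrees with the paper once $\rho^2F^2$ is replaced by $\lambda^2H^2$.
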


\begin{proof}
The non-parametric form of the equation can be written in the form 
\begin{equation}
\label{eq_phi}
\frac{\partial \varphi}{\partial t} = \frac{1}{F}.
\end{equation}
Let $\omega = \frac{1}{2} \, |\nabla\varphi|_{g_{S^{n-1}}}^2$. If we differentiate the identity \eqref{eq_phi} with respect to $\varphi^k \, \nabla_k$, we obtain
\begin{align*}
\frac{\partial\omega}{\partial t}
&= -\frac{1}{F^2} \, \varphi^k \, \nabla_k F \\ 
&= \frac{1}{\rho^2 F^2} \, \big ( \tilde{\sigma}^{ij} \, \varphi_{ijk} \, \varphi^k + 2 \, G^k \, \omega_k-2(n-1)\lambda\lambda'' \, \omega \big ).
\end{align*}
We next observe that
\begin{align*}
\omega_{ij}
&= \varphi_{kij} \, \varphi^k + \varphi_{ki} \, \varphi^k_{\ j} \\
&=\varphi_{ijk} \, \varphi^k + (\sigma_{ij}\sigma_{kp}-\sigma_{ik}\sigma_{jp}) \, \varphi^p \, \varphi^k+\varphi_{ki} \, \varphi^k_{\ j} \\
&=\varphi_{ijk} \, \varphi^k+\sigma_{ij} \, |\nabla\varphi|_{g_{S^{n-1}}}^2-\varphi_i \, \varphi_j+\varphi_{ki} \, \varphi^k_{\ j},
\end{align*}
where the covariant derivatives are taken with respect to $g_{S^{n-1}}$. Since
\[\tilde{\sigma}^{ij} \, (\sigma_{ij} \, |\nabla\varphi|_{g_{S^{n-1}}}^2 -\varphi_i\varphi_j) = 2(n-2) \, \omega,\] 
it follows that 
\[\tilde{\sigma}^{ij} \, \omega_{ij} = \tilde{\sigma}^{ij} \, \varphi_{ijk} + 2(n-2) \, \omega + \tilde{\sigma}^{ij} \, \varphi_{ki} \, \varphi^k_{\ j}.\] 
Putting these facts together, we conclude
\begin{align*}
\frac{\partial \omega}{\partial t} 
&= \frac{1}{\rho^2F^2} \, \big ( \tilde{\sigma}^{ij} \, \omega_{ij} + 2 \, G^k \, \omega_k - 2(n-2)\omega - 2(n-1)\lambda\lambda'' \, \omega \big ) \\ 
&- \frac{1}{\rho^2F^2} \, \tilde{\sigma}^{ij} \, \sigma^{kl} \, \varphi_{ik} \, \varphi_{jl} 
\end{align*}
Using Proposition \ref{upper.bound.H} and the inequality $\lambda'' > 0$, we obtain
\[\frac{(n-1)\lambda\lambda''}{\rho^2F^2} = \frac{(n-1)\lambda\lambda''}{\lambda^2H^2} \geq \frac{1}{n-1} - C \, e^{-\frac{2}{n-1} \, t}.\] 
Therefore, 
\[\frac{d}{dt} \omega_{\text{\rm max}} \leq -2 \, \Big ( \frac{1}{n-1} - C \, e^{-\frac{2}{n-1} \, t} \Big ) \, \omega_{\text{\rm max}},\]
where $\omega_{\text{\rm max}} = \frac{1}{2} \, \sup_{S^{n-1}} |\nabla\varphi|_{g_{S^{n-1}}}^2$. Thus $\omega_{\text{\rm max}}(t) = O(e^{-\frac{2}{n-1} \, t})$.
\end{proof}

%\begin{corollary}
%We have $|\nabla r|_{g_{S^{n-1}}}=O(1)$ and $|\nabla r|_g = O(e^{-\frac{1}{n-1} \, t})$.
%\end{corollary}

%\begin{proof} 
%Using Proposition \ref{grad.estimate.varphi}, we obtain 
%\[|\nabla r|_{g_{S^{n-1}}}^2 = \lambda^2 \, |\nabla \varphi|_{g_{S^{n-1}}}^2 = O(1)\]
%and
%\[|\nabla r|_g^2 = \lambda^2 \, |\nabla \varphi|_g^2 = \Big ( \sigma^{ij}-\frac{\varphi^i \, \varphi^j}{\rho^2} \Big ) \, \varphi_i \, \varphi_j = \frac{|\nabla \varphi|_{g_{S^{n-1}}}^2}{1+|\nabla \varphi|_{g_{S^{n-1}}}^2} = O(e^{-\frac{2}{n-1} \, t}).\]
%\end{proof}

\begin{proposition}
\label{time.derivative.of.varphi}
The function $\dot{\varphi} = \frac{\rho}{\lambda H}$ satisfies $\sup_{S^{n-1}} \dot{\varphi} \leq C \, e^{-\frac{1}{n-1} \, t}$.
\end{proposition}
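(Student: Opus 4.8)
The plan is to apply the maximum principle to the scalar $\dot\varphi$ on $S^{n-1}$, in the same spirit as the proof of Proposition~\ref{grad.estimate.varphi}. Working in the non-parametric form $\frac{\partial\varphi}{\partial t}=\frac1F$ with $F=\frac{(n-1)\lambda'-\tilde\sigma^{ij}\varphi_{ij}}{\rho^2}$, we have $\dot\varphi=\frac1F=\frac{\rho}{\lambda H}$. Differentiating $\frac{\partial\varphi}{\partial t}=\frac1F$ in time gives $\frac{\partial\dot\varphi}{\partial t}=-\frac1{F^2}\,\frac{\partial F}{\partial t}$; since $F$ depends on $t$ only through $\varphi$, $\nabla\varphi$ and $\nabla^2\varphi$ (its coefficients being built from the fixed round metric $g_{S^{n-1}}$), and since $\nabla$ commutes with $\partial_t$, the chain rule yields
\[\frac{\partial F}{\partial t}=\frac{\partial F}{\partial\varphi}\,\dot\varphi+\frac{\partial F}{\partial\varphi_k}\,\dot\varphi_k+\frac{\partial F}{\partial\varphi_{ij}}\,\dot\varphi_{ij}.\]

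Next I would compute the three partial derivatives. From the explicit formula, $\frac{\partial F}{\partial\varphi_{ij}}=-\frac{\tilde\sigma^{ij}}{\rho^2}$, and $\frac{\partial F}{\partial\varphi_k}=-\frac2{\rho^2}\,G_k$ as already recorded just before Proposition~\ref{grad.estimate.varphi}. For the $\varphi$-derivative, the only dependence is through $\lambda'=\lambda'(\tilde r)$ with $\tilde r=\Phi^{-1}(\varphi)$; since $\Phi'(r)=\frac1{\lambda(r)}$ we have $\frac{d\tilde r}{d\varphi}=\lambda$, and hence $\frac{\partial F}{\partial\varphi}=\frac{(n-1)\lambda\lambda''}{\rho^2}$. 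Substituting and using $\rho^2F^2=\lambda^2H^2$, this gives the evolution equation
\[\frac{\partial\dot\varphi}{\partial t}=\frac{\tilde\sigma^{ij}}{\lambda^2H^2}\,\dot\varphi_{ij}+\frac{2G^k}{\lambda^2H^2}\,\dot\varphi_k-\frac{(n-1)\lambda\lambda''}{\lambda^2H^2}\,\dot\varphi.\]

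Now I would invoke the maximum principle. Since $\tilde\sigma^{ij}$ is positive definite, at a point where $\dot\varphi(\cdot,t)$ attains its spatial maximum we have $\dot\varphi_k=0$ and $\tilde\sigma^{ij}\dot\varphi_{ij}\le0$; as $\dot\varphi>0$ by mean convexity, Hamilton's trick gives
\[\frac{d}{dt}\dot\varphi_{\max}\le-\frac{(n-1)\lambda\lambda''}{\lambda^2H^2}\,\dot\varphi_{\max}.\]
Exactly as in the proof of Proposition~\ref{grad.estimate.varphi}, one has $\frac{(n-1)\lambda\lambda''}{\lambda^2H^2}\ge\frac1{n-1}-C\,e^{-\frac2{n-1}t}$, using the upper bound $H\le n-1+O(e^{-\frac2{n-1}t})$ from Proposition~\ref{upper.bound.H} together with $\lambda''\ge\lambda$ (which follows from $\lambda''=\lambda+\frac{m(n-2)}2\,\lambda^{1-n}$). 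Therefore $\frac{d}{dt}\dot\varphi_{\max}\le-\bigl(\frac1{n-1}-C\,e^{-\frac2{n-1}t}\bigr)\dot\varphi_{\max}$, and integrating this linear differential inequality, with $\int_0^t\bigl(\frac1{n-1}-C\,e^{-\frac2{n-1}s}\bigr)\,ds\ge\frac t{n-1}-C'$, yields $\dot\varphi_{\max}(t)\le\dot\varphi_{\max}(0)\,e^{C'}\,e^{-\frac1{n-1}t}$, which is the claimed bound.

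The routine parts are the three partial-derivative computations and the concluding ODE integration; the only step requiring genuine care is $\frac{\partial F}{\partial\varphi}$, where one must not forget that $\lambda'$ is evaluated at $\tilde r=\Phi^{-1}(\varphi)$ and apply $\frac{d\tilde r}{d\varphi}=\lambda$. Once the evolution equation is in hand, the needed coefficient estimate is identical to one already carried out for Proposition~\ref{grad.estimate.varphi}, so I do not expect a substantial obstacle.
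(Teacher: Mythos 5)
Your proposal is correct and follows the paper's proof essentially verbatim: differentiate $\partial_t\varphi = 1/F$ in time to get the parabolic equation \eqref{eq_dot_varphi} for $\dot\varphi$, then apply the maximum principle using the coefficient estimate $\frac{(n-1)\lambda\lambda''}{\lambda^2H^2} \geq \frac{1}{n-1} - Ce^{-\frac{2}{n-1}t}$ already established in the proof of Proposition~\ref{grad.estimate.varphi}. You have simply spelled out a few steps the paper leaves implicit (the chain-rule expansion of $\partial_t F$, the computation of $\partial F/\partial\varphi$ via $d\tilde r/d\varphi = \lambda$, and the final ODE integration), all of which are correct.
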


\begin{proof}
If we differentiate \eqref{eq_phi} with respect to $t$, we obtain
\begin{align} 
\label{eq_dot_varphi}
\frac{\partial \dot{\varphi}}{\partial t} 
&= -\frac{1}{F^2} \, \frac{\partial F}{\partial t} \notag \\ 
&= \frac{1}{\rho^2F^2} \, \big ( \tilde{\sigma}^{ij} \, \dot{\varphi}_{ij} + 2 \, G^k \, \dot{\varphi}_k-(n-1)\lambda\lambda'' \, \dot{\varphi} \big ). 
\end{align}
As above, we have 
\[\frac{(n-1)\lambda\lambda''}{\rho^2F^2} = \frac{(n-1)\lambda\lambda''}{\lambda^2H^2} \geq \frac{1}{n-1} - C \, e^{-\frac{2}{n-1} \, t}\] 
in view of Proposition \ref{upper.bound.H}. Using the maximum principle, we obtain 
\[\sup_{S^{n-1}} \dot{\varphi} \leq C \, e^{-\frac{1}{n-1} \, t},\] 
as claimed.
\end{proof}

\begin{corollary} 
\label{lower.bound.for.H}
The mean curvature $H$ is bounded from below by a positive constant.
\end{corollary}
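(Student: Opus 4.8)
The plan is to derive a positive lower bound for $H$ from the estimates already in hand, namely the gradient decay for $\varphi$ (Proposition \ref{grad.estimate.varphi}) and the decay of $\dot{\varphi}$ (Proposition \ref{time.derivative.of.varphi}). Recall from the definition $\dot{\varphi} = \frac{\rho}{\lambda H}$, so that
\[
H = \frac{\rho}{\lambda \, \dot{\varphi}}.
\]
Thus a positive lower bound for $H$ is equivalent to an upper bound of the form $\lambda \, \dot{\varphi} \leq C$, uniformly in space and time. Since $\rho = \sqrt{1 + |\nabla \varphi|_{g_{S^{n-1}}}^2} \geq 1$, it suffices to show that the quantity $\lambda(\tilde r) \, \dot{\varphi}$ stays bounded above.

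First I would combine Proposition \ref{time.derivative.of.varphi} with the growth bound for $\lambda$ from Lemma \ref{bounds.for.lambda}. The latter gives $\lambda(\overline{r}(t)) \leq e^{\frac{1}{n-1} t} \, \lambda(\overline{r}(0))$, i.e. $\sup_{S^{n-1}} \lambda(\tilde r(\cdot,t)) \leq C \, e^{\frac{1}{n-1} t}$. The former gives $\sup_{S^{n-1}} \dot{\varphi} \leq C \, e^{-\frac{1}{n-1} t}$. Multiplying these two bounds, we obtain
\[
\sup_{S^{n-1}} \big( \lambda(\tilde r) \, \dot{\varphi} \big) \leq C \, e^{\frac{1}{n-1} t} \cdot C \, e^{-\frac{1}{n-1} t} = C
\]
for all $t$, where $C$ depends only on the initial hypersurface $\Sigma_0$ and on $n$. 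Combining with $\rho \geq 1$ gives $H \geq \frac{1}{\lambda \dot{\varphi}} \geq \frac{1}{C} > 0$ on $\Sigma_t$ for every $t \in [0,T)$.

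For small $t$ (or more precisely on any compact time interval on which the flow is known to exist smoothly), the hypersurface is smooth and mean convex, so $H$ is automatically bounded below by a positive constant there; hence it is really the large-$t$ behavior that the argument above controls, and the two regimes together give a uniform positive lower bound. The only subtlety I anticipate is making sure the constants in Lemma \ref{bounds.for.lambda} and Proposition \ref{time.derivative.of.varphi} are genuinely uniform in $t$ rather than blowing up as $t \to T$; but both propositions are stated with constants independent of $t$, so this is not a real obstacle. In fact there is no hard step here — the corollary is an immediate consequence of cancelling the exponential growth of $\lambda$ against the exponential decay of $\dot{\varphi}$.
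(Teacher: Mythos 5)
Your argument is correct and is essentially identical to the paper's: both deduce the lower bound for $H$ by writing $\dot{\varphi} = \rho/(\lambda H)$, combining the decay $\dot{\varphi} \leq C e^{-t/(n-1)}$ from Proposition \ref{time.derivative.of.varphi} with the growth bound $\lambda \leq C e^{t/(n-1)}$ from Lemma \ref{bounds.for.lambda}, and using $\rho \geq 1$.
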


\begin{proof} 
By Proposition \ref{time.derivative.of.varphi}, we have $\frac{\rho}{\lambda H} \leq C \, e^{-\frac{1}{n-1} \, t}$ for some uniform constant $C$. Since $\rho \geq 1$ and $\lambda \leq C \, e^{\frac{1}{n-1} \, t}$, the assertion follows.
\end{proof}

Finally, we establish a uniform bound for the second fundamental form.

\begin{proposition}
\label{2nd.fund.form}
The norm of the second fundamental form is uniformly bounded globally in time.
\end{proposition}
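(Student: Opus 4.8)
The plan is to bound the largest principal curvature $\kappa_{\max}$ of $\Sigma_t$ from above by a maximum principle argument. This is enough: since $H = \sum_i \kappa_i$ and each $\kappa_i \le \kappa_{\max}$, one has $\kappa_{\min} \ge H - (n-2)\,\kappa_{\max}$, and $H$ is bounded below by a positive constant by Corollary \ref{lower.bound.for.H}, so an upper bound on $\kappa_{\max}$ produces a two-sided bound on all principal curvatures and hence on $|A|$. The first step is the evolution equation of the Weingarten operator under \eqref{para_imcf}: combining the variation formulas $\partial_t g_{ij} = \tfrac{2}{H}\,h_{ij}$ and $\partial_t \nu = \tfrac{1}{H^2}\,\nabla H$ with the standard formula for $\partial_t h_{ij}$ and the Simons identity, one obtains
\[\frac{\partial h_i^j}{\partial t} = \frac{1}{H^2}\,\Delta h_i^j - \frac{2}{H^3}\,\nabla_i H\,\nabla^j H + \frac{|A|^2}{H^2}\,h_i^j - \frac{2}{H}\,(h^2)_i^j + E_i^j,\]
where $E$ gathers the terms involving the ambient curvature tensor and its first covariant derivative; by Lemma \ref{curv_expansion} and the lower bound on $H$ one has $|E| \le C\,(1 + |A|)$ uniformly in $t$.

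Passing to the largest eigenvalue, one obtains (in the viscosity sense, if the eigenvalue is not simple) that at a point where $\kappa_{\max}$ attains its maximum over $\Sigma_t$
\[\frac{d}{dt}\,\kappa_{\max} \le \frac{|A|^2}{H^2}\,\kappa_{\max} - \frac{2}{H}\,\kappa_{\max}^2 + C\,(1+\kappa_{\max}),\]
the gradient terms being discarded with the favorable sign. This inequality does not close on its own: the cubic term $\tfrac{|A|^2}{H^2}\,\kappa_{\max}$ is not controlled by $-\tfrac{2}{H}\,\kappa_{\max}^2$, because a large positive principal curvature could in principle be balanced by a large negative one, keeping $H$ bounded while $|A|^2$ far exceeds $\kappa_{\max}^2$. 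Excluding this is exactly where the star-shapedness of $\Sigma_0$ is needed.

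Following the scheme for star-shaped inverse-curvature flows (cf.\ Gerhardt \cite{Gerhardt2}, Ding \cite{Ding}), I would feed in the star-shapedness through the gradient function $\rho = \sqrt{1 + |\nabla\varphi|^2_{g_{S^{n-1}}}} = \langle \partial_r,\nu\rangle^{-1}$ and the representation $h_i^j = \tfrac{\lambda'}{\lambda\rho}\,\delta_i^j - \tfrac{1}{\lambda\rho}\,\tilde{\sigma}^{jk}\varphi_{ik}$ of Proposition \ref{induced_metric}. Concretely, I would apply the maximum principle not to $\kappa_{\max}$ itself but to a test quantity combining $\log\kappa_{\max}$ with a suitable increasing function of the gradient, such as $W = \log\kappa_{\max} + \psi(\rho)$, chosen so that the reaction term produced by $\partial_t \psi(\rho)$ absorbs the dangerous term in $\partial_t \log\kappa_{\max}$. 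The crucial observation is that, by the evolution equation for $\omega = \tfrac12|\nabla\varphi|^2$ established in the proof of Proposition \ref{grad.estimate.varphi} together with the identity $\rho^2 F^2 = \lambda^2 H^2$, the reaction term in the evolution of $\rho$ contains a negative contribution of the size of $\tfrac{|A|^2}{H^2}$ — this is precisely the ``good'' term $-\tfrac{1}{\rho^2 F^2}\,\tilde{\sigma}^{ij}\sigma^{kl}\varphi_{ik}\varphi_{jl}$ which played no role in the gradient estimate, and it is exactly what is needed to beat the bad reaction term here. Since $\rho$, and therefore $\psi(\rho)$, is uniformly bounded (indeed $\rho \to 1$), a uniform bound on $\sup_{S^{n-1}} W$ gives $\kappa_{\max} \le C$.

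The main obstacle is carrying out this last step cleanly: one has to combine the evolution equations for $\kappa_{\max}$ and for $\rho$ at a maximum of $W$ and keep careful track of all the transport and gradient terms — above all the cross term $\tfrac{1}{H^2}|\nabla\log\kappa_{\max}|^2 = \tfrac{1}{H^2}|\nabla\psi(\rho)|^2$, which is of the same order as the competing reaction terms; showing that it does not destroy the estimate is the delicate point, and it is there that the decay of $|\nabla\varphi|$ from Proposition \ref{grad.estimate.varphi}, the relation $|\nabla\tilde{r}| = \lambda|\nabla\varphi|$, and the pinching of $H$ from Proposition \ref{upper.bound.H} and Corollary \ref{lower.bound.for.H} all enter. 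The remaining ingredients — existence of the spatial maximizer, smoothness of $\kappa_{\max}$ at simple eigenvalues, and the viscosity argument when the eigenvalue is not simple — are routine.
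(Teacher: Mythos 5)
You set up the same evolution equation \eqref{eq_second_fund} as the paper and correctly observe that the maximum principle applied directly to $\kappa_{\max}$ does not close, because the reaction term $\tfrac{|A|^2}{H^2}\,h_i^j$ produces a contribution that grows cubically in $\kappa_{\max}$ and swamps the quadratic damping $-\tfrac{2}{H}\,\kappa_{\max}^2$. But your plan to repair this by a Gerhardt-type argument with the test function $W=\log\kappa_{\max}+\psi(\rho)$ is where the proof stalls: you have identified where the compensating term should come from (the negative $-\tfrac{1}{\rho^2F^2}\tilde{\sigma}^{ij}\sigma^{kl}\varphi_{ik}\varphi_{jl}$ piece in $\partial_t\omega$), but you do not carry out the maximum-principle computation at a spatial maximum of $W$, and you explicitly flag the cross-term $\tfrac{1}{H^2}|\nabla\psi(\rho)|^2$ as the delicate obstruction you have not resolved. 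As written this is an outline with a gap at the decisive step, not a proof.

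The paper avoids the entire difficulty with a much lighter trick, carried out purely in the parametric picture. It multiplies through by $H$ and works with the tensor $M_i^j=H\,h_i^j$. When you combine the evolution of $h_i^j$ (which contains $+\tfrac{|A|^2}{H^2}\,h_i^j$) with that of $H$ (which contains $-\tfrac{|A|^2}{H}$), the dangerous $|A|^2$ reaction terms cancel identically, leaving the Riccati-type equation \eqref{eq_M} with the \emph{negative} quadratic term $-\tfrac{2}{H^2}\,M_i^k M_k^j$ as the dominant reaction. The only further input is the elementary observation that $\operatorname{tr} M=H^2>0$, which forces the smallest eigenvalue of $M$ to be bounded below by $-(n-2)$ times the largest, hence $|M|\leq C\,\mu_{\max}$; together with the two-sided bounds on $H$ from Proposition \ref{upper.bound.H} and Corollary \ref{lower.bound.for.H}, this makes the ODE for $\mu_{\max}$ close immediately. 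Star-shapedness enters only through those prior bounds on $H$; once those are in hand, no auxiliary gradient function $\rho$ is needed at this stage. So: your approach is a genuinely different (and heavier) route; it may well be made to work in the style of Gerhardt or Ding, but the crucial estimate is left unproven, whereas the paper's substitution $M=H\,h$ eliminates the problem at the source.
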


\begin{proof}
We work with the parametric formulation. Using Lemma 2.1 in \cite{Huisken}, we compute
\begin{align*}
\frac{\partial h_i^j}{\partial t}
&= \frac{1}{H^2} \, \nabla^j \nabla_i H - 2 \, \frac{\nabla_i H \, \nabla^j H}{H^3} - \frac{h_i^kh_k^j}{H} - \frac{1}{H} \, g^{mj} \, R_{\nu i\nu m} \\ 
&= \frac{\Delta h_i^j}{H^2} - 2 \, \frac{\nabla_i H \, \nabla^j H}{H^3} + \frac{|A|^2}{H^2} \, h_i^j - 2 \, \frac{h_i^kh_k^j}{H} \\
&+ \frac{2}{H^2} \, g^{kl} \, g^{sj} \, R_{miks} \, h^m_l - \frac{1}{H^2} \, g^{kl} \, g^{sj} \, R_{mksl} \, h^m_i - \frac{1}{H^2} \, g^{kl} \, R_{mkil} \, h^{mj} \\
&+ \frac{1}{H^2} \, \text{\rm Ric}(\nu,\nu) \, h_i^j - \frac{2}{H} \, g^{mj} \, R_{\nu i\nu m} \\ 
&- \frac{1}{H^2} \, g^{kl} \, g^{mj} \, \bar{\nabla}_m R_{\nu kil} - \frac{1}{H^2} \, g^{kl} \, g^{mj} \, \bar{\nabla}_k R_{\nu iml}. 
\end{align*}
Using Lemma \ref{curv_expansion}, we obtain 
\begin{align}
\label{eq_second_fund}
\frac{\partial h_i^j}{\partial t}
&= \frac{\Delta h_i^j}{H^2} - 2 \, \frac{\nabla_i H \, \nabla^j H}{H^3} + \frac{|A|^2}{H^2} \, h_i^j - 2 \, \frac{h_i^kh_k^j}{H} \notag \\
&+ (n-1) \, \frac{h_i^j}{H^2} + \frac{|A|+1}{H^2} \, O(e^{-\frac{n}{n-1} \, t}). 
\end{align}
Combining \eqref{eq_H} and \eqref{eq_second_fund}, we obtain the following evolution equation for the tensor $M_i^j = H \, h_i^j$:
\begin{align}
\label{eq_M}
\frac{\partial M^j_i}{\partial t}
&= \frac{\Delta M^j_i}{H^2} - 2 \, \frac{\nabla^k H \, \nabla_k M^j_i}{H^3} - 2 \, \frac{\nabla_i H \, \nabla^j H}{H^2} \notag \\ 
&- 2 \, \frac{M^k_i \, M^j_k}{H^2} + 2(n-1) \, \frac{M^j_i}{H^2} + \frac{|M|+H}{H^2} \, O(e^{-\frac{n}{n-1} \, t}).
\end{align}
Let $\mu$ denote the largest eigenvalue of the tensor $M_i^j$, and let $\mu_{\text{\rm max}}(t)$ denote the maximum of $\mu$ at a given time $t$. Since the trace of $M$ is positive, we have $|M| \leq C\mu$ for some constant $C$. Since $H$ is uniformly bounded from above and below, we obtain 
\[\frac{d}{dt} \mu_{\text{\rm max}} \leq -\frac{1}{C} \, \mu_{\text{\rm max}}^2+C \, \mu_{\text{\rm max}}+C\]
for some uniform constant $C$. Therefore, $\mu_{\text{\rm max}} \leq C$ for some uniform constant $C$. This implies $|M| \leq C$. Since $H$ is uniformly bounded from below, we conclude that $|A|$ is uniformly bounded.
\end{proof}

\begin{corollary}
The solution of the inverse mean curvature flow is defined on $[0,\infty)$.
\end{corollary}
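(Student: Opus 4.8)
The plan is to run a standard continuation argument for the quasilinear parabolic equation \eqref{non_para_imcf}. Suppose, for contradiction, that the maximal time of existence is some $T < \infty$. I will show that the a priori estimates established above combine to give uniform $C^{k,\alpha}$ bounds for the graph function $\tilde{r}(\cdot,t)$ on $[0,T)$, from which smooth convergence as $t \to T$ follows, and then the short-time existence theory lets us restart the flow past $T$, contradicting maximality.

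First I would collect the $C^0$ bound. By Lemma \ref{bounds.for.lambda} we have $\lambda(\underline{r}(0)) \, e^{\frac{1}{n-1}t} \leq \lambda(\underline{r}(t))$ and $\lambda(\overline{r}(t)) \leq \lambda(\overline{r}(0)) \, e^{\frac{1}{n-1}t}$, so on the finite interval $[0,T)$ the function $\tilde{r}(\cdot,t)$ takes values in a fixed compact subinterval of $[s_0,\infty)$ (using that $\lambda$ is strictly increasing). Consequently $\lambda$, $\lambda'$, $\lambda''$ and all their further derivatives, evaluated along the flow, are bounded above and below by positive constants. Next, Proposition \ref{grad.estimate.varphi} gives $\sup_{S^{n-1}} |\nabla \varphi|_{g_{S^{n-1}}} = O(e^{-\frac{1}{n-1}t})$; since $\varphi = \Phi(\tilde{r})$ with $\Phi' = 1/\lambda > 0$, this yields a uniform $C^1$ bound for $\tilde{r}$, and in particular $\rho = \sqrt{1+|\nabla\varphi|_{g_{S^{n-1}}}^2}$ stays uniformly bounded. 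Finally, Corollary \ref{lower.bound.for.H} gives a uniform positive lower bound for $H$, and Proposition \ref{2nd.fund.form} a uniform upper bound for $|A|$; together with the formula for $h_{ij}$ in Proposition \ref{induced_metric} and the $C^1$ bound, this produces a uniform $C^2$ bound for $\tilde{r}$ on $[0,T)$.

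With these bounds in hand, the non-parametric equation \eqref{non_para_imcf}, viewed as a PDE for $\tilde{r}$, has leading term governed by the positive-definite tensor $\tilde{\sigma}^{ij}$ and denominator $\lambda \rho H$; the $C^0$ bound keeps $\lambda$ bounded above and below, the lower bound on $H$ keeps the denominator away from zero, and the $C^1$ bound keeps $\rho$ bounded, so the equation is uniformly parabolic with coefficients bounded in terms of $\tilde{r}$ and $\nabla\tilde{r}$, hence uniformly bounded on $[0,T)$. Krylov–Safonov estimates then give a uniform interior $C^{2,\alpha}$ bound on $[\tau,T)$ for any $\tau > 0$, after which the coefficients are Hölder continuous and Schauder theory bootstraps to uniform $C^{k,\alpha}$ bounds for every $k$. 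Therefore $\tilde{r}(\cdot,t)$ converges in $C^\infty(S^{n-1})$ as $t \to T$ to a smooth star-shaped limit, and the flow can be continued beyond $T$, contradicting maximality. Hence $T = \infty$.

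I do not expect any genuine obstacle: the substance of the corollary is entirely contained in the a priori estimates already proved, and the only point requiring care is verifying that those estimates make \eqref{non_para_imcf} uniformly parabolic on $[0,T)$ — which is exactly what the $C^0$ bound on $\tilde{r}$, the gradient bound on $\varphi$, and the lower bound on $H$ supply.
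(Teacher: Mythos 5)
Your argument is correct and is precisely the standard continuation argument the paper implicitly invokes: the authors state this corollary without proof, evidently regarding it as an immediate consequence of the uniform $C^0$ estimate (Lemma \ref{bounds.for.lambda}), the gradient bound (Proposition \ref{grad.estimate.varphi}), the two-sided bounds on $H$ (Proposition \ref{upper.bound.H}, Corollary \ref{lower.bound.for.H}), and the second-fundamental-form bound (Proposition \ref{2nd.fund.form}), combined with parabolic regularity and short-time existence exactly as you describe. One small technical refinement: since the operator $1/F$ is fully nonlinear in $\varphi_{ij}$ (the coefficient $\tilde{\sigma}^{ij}/(\rho^2 F^2)$ itself depends on $\varphi_{ij}$ through $F$), passing from the $C^2$ bound to $C^{2,\alpha}$ is most cleanly justified by Evans--Krylov, using that $1/F$ is convex in $\varphi_{ij}$ on the set where $F>0$; Krylov--Safonov by itself only yields $C^{\alpha}$ or $C^{1,\alpha}$ for such equations.
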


\section{The asymptotic behavior of the flow as $t \to \infty$}

In this section, we improved estimates for the mean curvature and second fundamental form. 

\begin{proposition}
We have $H=n-1+O(t \, e^{-\frac{2}{n-1} \, t})$.
\end{proposition}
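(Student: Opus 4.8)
The plan is to observe first that the upper bound is already in hand: Proposition \ref{upper.bound.H} gives $H \leq n-1 + O(e^{-\frac{2}{n-1} \, t})$, which is even stronger than what is claimed. So only the matching lower bound $H \geq n-1 - O(t \, e^{-\frac{2}{n-1} \, t})$ remains, and I would obtain it by a maximum principle for $H$ at its spatial minimum. Writing $\mathring{A}$ for the trace-free part of the second fundamental form, so that $|A|^2 = |\mathring{A}|^2 + \frac{1}{n-1} \, H^2$, I would rewrite \eqref{eq_H} as
\[\frac{\partial H}{\partial t} = \frac{\Delta H}{H^2} - 2 \, \frac{|\nabla H|^2}{H^3} - \frac{|\mathring{A}|^2}{H} - \Big ( \frac{H}{n-1} - \frac{n-1}{H} \Big ) + \frac{1}{H} \, O(e^{-\frac{n}{n-1} \, t}).\]
The term $-\big ( \frac{H}{n-1} - \frac{n-1}{H} \big )$ vanishes at $H = n-1$ and has derivative $-\frac{2}{n-1}$ there, so it is a restoring force driving $H$ toward $n-1$ at rate $\frac{2}{n-1}$. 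At a point where $H$ attains its spatial minimum one has $\nabla H = 0$ and $\Delta H \geq 0$, so, using that $H$ is bounded above (Proposition \ref{upper.bound.H}) and bounded below by a positive constant,
\[\frac{d}{dt} H_{\min} \geq -\frac{|\mathring{A}|^2}{H_{\min}} - \frac{2}{n-1} \, (H_{\min} - (n-1)) + O\big ( (H_{\min}-(n-1))^2 \big ) - C \, e^{-\frac{n}{n-1} \, t}.\]

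The one input still missing is a decay estimate for $|\mathring{A}|^2$, and I would establish $|\mathring{A}|^2 = O(e^{-\frac{2}{n-1} \, t})$ from the Simons-type evolution equation for $w := |A|^2 - \frac{1}{n-1} \, H^2 = |\mathring{A}|^2$, obtained by combining \eqref{eq_second_fund} (equivalently \eqref{eq_M} for $M_i^j = H \, h_i^j$) with \eqref{eq_H}. A direct computation shows that the reaction term of this evolution equation is $0$ at $h_i^j = \delta_i^j$ and linearizes there to $-\frac{4}{n-1} \, w$, so it is bounded above by $-\frac{4}{n-1} \, w + C \, w^{\frac{3}{2}} + C \, e^{-\frac{n}{n-1} \, t}$ once $w$ is small, the exponential coming from $|\text{\rm Ric} + (n-1) \, g| = O(e^{-\frac{n}{n-1} \, t})$ on $\Sigma_t$ (Lemma \ref{curv_expansion} and Lemma \ref{bounds.for.lambda}). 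The first-order terms are handled by the contracted Codazzi equation, which on $\Sigma_t$ gives $|\nabla H| \leq C \, |\nabla \mathring{A}| + O(e^{-\frac{n}{n-1} \, t})$, allowing one to absorb them against the good term $-\frac{2}{H^2} \, |\nabla \mathring{A}|^2$ in the evolution of $w$ (using also that $|\mathring{A}|$ is small). At a spatial maximum of $w$ one then obtains $\frac{d}{dt} w_{\max} \leq -c \, w_{\max} + C \, e^{-\frac{2}{n-1} \, t}$ with $c > \frac{2}{n-1}$, hence $w_{\max}(t) = O(e^{-\frac{2}{n-1} \, t})$ by a comparison argument. To get past the fact that $w$ is only known to be bounded — not small — at $t = 0$ (Proposition \ref{2nd.fund.form}), this maximum principle argument has to be run along a continuity argument, which can be seeded by the $C^1$-convergence of the graph function $\varphi$ (Proposition \ref{grad.estimate.varphi}): it forces $\Sigma_t$ to become nearly umbilic for large $t$, providing a time from which the argument can be started. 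Alternatively one can track $w$ and $H - (n-1)$ simultaneously by a single barrier.

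Finally, inserting $|\mathring{A}|^2 = O(e^{-\frac{2}{n-1} \, t})$ into the differential inequality for $H_{\min}$ yields $\frac{d}{dt} H_{\min} \geq -\frac{2}{n-1} \, (H_{\min} - (n-1)) - C \, e^{-\frac{2}{n-1} \, t}$, the quadratic correction being absorbed once $H_{\min} - (n-1)$ is small. Comparing with the solution of $y' = -\frac{2}{n-1} \, y - C \, e^{-\frac{2}{n-1} \, t}$ gives $H_{\min}(t) - (n-1) \geq -C \, t \, e^{-\frac{2}{n-1} \, t}$; the factor of $t$ is precisely the resonance between the relaxation rate $\frac{2}{n-1}$ of $H$ and the decay rate $\frac{2}{n-1}$ of the forcing term $|\mathring{A}|^2/H$ (a finer analysis of the forcing would remove it, but it is not needed). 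Combined with Proposition \ref{upper.bound.H}, this gives $H = n-1 + O(t \, e^{-\frac{2}{n-1} \, t})$. I expect the main obstacle to be the intermediate estimate $|\mathring{A}|^2 = O(e^{-\frac{2}{n-1} \, t})$ — specifically the control of the first-derivative terms in the evolution of $|\mathring{A}|^2$ via Codazzi and the continuity/bootstrap needed because the flow is not a priori known to become round. If instead one argues via Hamilton's tensor maximum principle to prove $M_i^j \geq (n-1 - \delta(t)) \, \delta_i^j$ directly — which would give the $H$ lower bound by taking traces — the same obstacle reappears as the unfavorable sign of the term $-2 \, \nabla_i H \, \nabla^j H / H^2$ in \eqref{eq_M} at the first touching point.
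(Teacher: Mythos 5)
Your proposal takes a genuinely different route from the paper, and while the high-level strategy (maximum principle with a resonant forcing term producing the factor $t$) is the same in spirit, the intermediate step you rely on --- a direct decay estimate $|\mathring{A}|^2 = O(e^{-\frac{2}{n-1} t})$ established \emph{before} the mean curvature estimate --- does not close as written. The paper instead works in the non-parametric setting with the single scalar $\chi = \rho/H$, and this choice is precisely what sidesteps the difficulty.

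The gap is the seeding of your bootstrap, which you flag as the main obstacle but then dismiss too quickly. Two separate smallness conditions are needed to close the evolution inequality for $w = |\mathring{A}|^2$. First, the coefficient of the linear term in your reaction bound is really $-\frac{6}{n-1} + \frac{2(n-1)}{H^2}$ (a direct computation from \eqref{eq_second_fund} and \eqref{eq_H} using $|A|^2 = \frac{H^2}{n-1} + w$): this equals $-\frac{4}{n-1}$ at $H = n-1$, but it only beats the forcing rate $\frac{2}{n-1}$ when $H > (n-1)/\sqrt{2}$, and Corollary \ref{lower.bound.for.H} gives only \emph{some} positive lower bound on $H$, not a quantitative one. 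Second, the Codazzi absorption $\frac{|\mathring{A}| \, |\nabla H|^2}{H^3} \lesssim \frac{|\mathring{A}|}{H} \cdot \frac{|\nabla \mathring{A}|^2}{H^2}$ requires $|\mathring{A}|/H$ small, not merely bounded. You propose to seed both from Proposition \ref{grad.estimate.varphi}, but that is a $C^1$ estimate on the graph function $\varphi$; the second fundamental form involves $D^2 \varphi$, and $|\nabla \varphi| \to 0$ together with the uniform bound $|A| \leq C$ of Proposition \ref{2nd.fund.form} does \emph{not} imply $|\mathring{A}|$ small or $H$ close to $n-1$. So the proposed seed does not exist at this stage of the argument, and without it the argument is circular (you need $H$ near $n-1$ to prove $\mathring{A}$ decays, and $\mathring{A}$ decay to prove $H$ near $n-1$).

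The paper's proof breaks this circle by working with $\chi = \rho/H$ in the non-parametric formulation. The evolution inequality it derives, $\frac{d}{dt} \chi_{\max} \leq 2 \chi_{\max}^2 - 2(n-1) \chi_{\max}^3 + C e^{-\frac{2}{n-1} t}$, is valid \emph{globally} given only that $\chi$ lies in a fixed compact interval $[1/C, C]$ (from Proposition \ref{time.derivative.of.varphi} and Corollary \ref{lower.bound.for.H}) and that the first-order terms are bounded via Propositions \ref{grad.estimate.varphi} and \ref{2nd.fund.form} (the cubic term makes the reaction negative for $\chi > \frac{1}{n-1}$ no matter how large $\chi$ is, and the first-order terms in the evolution of $\chi$ only need to be bounded, not absorbed). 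The autonomous cubic has $\frac{1}{n-1}$ as an attracting zero with linearization $-\frac{2}{n-1}$, so $\chi_{\max} \leq \frac{1}{n-1} + O(t \, e^{-\frac{2}{n-1} t})$ holds from any bounded initial data, no smallness or continuity argument needed. In short: your decomposition into $(H, \mathring{A})$ requires a smallness threshold to be hit before the maximum principle becomes coercive, while the paper's scalar $\chi$ satisfies a globally coercive ODE, which is the essential reason it proves this proposition \emph{before} Proposition \ref{2nd.fund.form.2} rather than after.
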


\begin{proof}
In view of Proposition \ref{upper.bound.H}, it suffices to bound $H$ from below. To that end, we again work in the non-parametric setting. We consider the function 
\[\chi = \lambda \, \dot{\varphi} = \frac{\rho}{H}.\] 
The results in the previous section imply that the function $\chi$ is uniformly bounded from above and below. Using \eqref{eq_dot_varphi} and the identity $\dot{\varphi} = \frac{1}{F} = \frac{\chi}{\lambda}$, we obtain 
\begin{align*} 
\frac{\partial \chi}{\partial t} 
&= \lambda \, \frac{\partial \dot{\varphi}}{\partial t} + \lambda\lambda' \, \dot{\varphi}^2 \\ 
&= \frac{\chi^2}{\rho^2\lambda} \, \Big ( \tilde{\sigma}^{ij} \, \nabla_j \nabla_i \big ( \frac{\chi}{\lambda} \big ) + 2 \, G^k \, \nabla_k \big ( \frac{\chi}{\lambda} \big ) - (n-1)\lambda'' \, \chi \Big ) + \frac{\lambda'}{\lambda} \, \chi^2 \\ 
&= \frac{\chi^2}{\rho^2\lambda^2} \, \Big ( \tilde{\sigma}^{ij} \, \chi_{ij} - \frac{2}{\lambda} \, \tilde{\sigma}^{ij} \, \lambda_i \, \chi_j + 2 \, G^k \, \chi_k \Big ) \\ 
&+ \frac{\chi^2}{\rho^2\lambda^2} \, \Big ( \frac{2\chi}{\lambda^2} \, \tilde{\sigma}^{ij} \, \lambda_i \, \lambda_j - \frac{\chi}{\lambda} \, \tilde{\sigma}^{ij} \, \lambda_{ij} - \frac{2\chi}{\lambda} \, G^k \, \lambda_k \Big ) \\ 
&+ \frac{\lambda'}{\lambda} \, \chi^2 - \frac{n-1}{\rho^2} \, \frac{\lambda''}{\lambda} \, \chi^3. 
\end{align*} 
Using Proposition \ref{grad.estimate.varphi}, we obtain 
\[\tilde{\sigma}^{ij} \, \lambda_i \, \lambda_j \leq C \, e^{\frac{2}{n-1} \, t}.\] 
Moreover, using the identity 
\[-\tilde{\sigma}^{ij} \, \varphi_{ij} = \rho^2 \, F - (n-1) \, \lambda' = \rho^2 \, \frac{\lambda}{\chi} - (n-1) \, \lambda',\] 
we obtain 
\begin{align*} 
-\tilde{\sigma}^{ij} \, \lambda_{ij} 
&= -\lambda\lambda' \, \tilde{\sigma}^{ij} \, \varphi_{ij} - \lambda \, (\lambda\lambda'' + {\lambda'}^2) \, \tilde{\sigma}^{ij} \, \varphi_i \, \varphi_j \\ 
&\leq \lambda \lambda' \, \Big ( \rho^2 \, \frac{\lambda}{\chi} - (n-1) \, \lambda' \Big ) + C \, e^{\frac{1}{n-1} \, t}. 
\end{align*}
Finally, the second fundamental form is uniformly bounded by Proposition \ref{2nd.fund.form}. Using Proposition \ref{induced_metric}, we obtain $|D^2 \varphi| \leq C \, e^{\frac{1}{n-1} \, t}$, where $D^2 \varphi$ denotes the Hessian of $\varphi$ with respect to $g_{S^{n-1}}$. Using Proposition \ref{grad.estimate.varphi}, we conclude that 
\[-G^k \, \varphi_k = -F \, |\nabla\varphi|_{g_{S^{n-1}}}^2 + \frac{1}{\rho^4} \, \varphi^i \, \varphi^j \, \varphi_{ij} \leq C \, e^{-\frac{1}{n-1} \, t},\] 
hence 
\[-G^k \, \lambda_k \leq C \, e^{\frac{1}{n-1} \, t}.\] 
Putting these facts together, we conclude that 
\begin{align*} 
\frac{\partial \chi}{\partial t} 
&\leq \frac{\chi^2}{\rho^2\lambda^2} \, \Big ( \tilde{\sigma}^{ij} \, \chi_{ij} - \frac{2}{\lambda} \, \tilde{\sigma}^{ij} \, \lambda_i \, \chi_j + 2 \, G^k \, \chi_k \Big ) \\ 
&+ \frac{2\lambda'}{\lambda} \, \chi^2 - \frac{n-1}{\rho^2} \, \frac{\lambda \, \lambda''+{\lambda'}^2}{\lambda^2} \, \chi^3 + C \, e^{-\frac{2}{n-1} \, t}. 
\end{align*} 
Since $\chi$ is uniformly bounded and $\rho = 1 + O(e^{-\frac{2}{n-1} \, t})$, the maximum of $\chi$ satisfies 
\[\frac{d}{dt} \chi_{\text{\rm max}} \leq 2 \, \chi_{\text{\rm max}}^2 - 2(n-1) \, \chi_{\text{\rm max}}^3 + C \, e^{-\frac{2}{n-1} \, t}.\] 
In particular, we have 
\[\frac{d}{dt} \chi_{\text{\rm max}} \leq \frac{2}{(n-1)^2} - \frac{2}{n-1} \, \chi_{\text{\rm max}} + C \, e^{-\frac{2}{n-1} \, t}\] 
whenever $\chi_{\text{\rm max}} \geq \frac{1}{n-1}$. Therefore, $\chi_{\text{\rm max}} \leq \frac{1}{n-1} + O(t \, e^{-\frac{2}{n-1} \, t})$. Since $\rho = 1 + O(e^{-\frac{2}{n-1} \, t})$, we conclude that $H \geq n-1 - O(t \, e^{-\frac{2}{n-1} \, t})$. 
\end{proof}

\begin{proposition}
\label{2nd.fund.form.2}
We have $|h_i^j-\delta_i^j| \leq O(t^2 \, e^{-\frac{2}{n-1} \, t})$.
\end{proposition}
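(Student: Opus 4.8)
The plan is to work with the tensor $M_i^j = H\,h_i^j$ introduced just before \eqref{eq_M}, and to argue as in the proof of Proposition \ref{2nd.fund.form}, but now feeding in the sharper estimate $H = n-1 + O(t\,e^{-\frac{2}{n-1}t})$ from the preceding proposition. Let $\mu_{\max}(t)$ denote the maximum over $\Sigma_t$ of the largest eigenvalue of $M_i^j$. The first and main step is to show the one-sided bound
\[ \mu_{\max}(t) \leq n-1 + O\big(t^2\,e^{-\frac{2}{n-1}t}\big). \]

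To prove this I would apply Hamilton's maximum principle to the evolution equation \eqref{eq_M}. At a point and direction realizing $\mu_{\max}$, the Laplacian term is $\leq 0$; the transport term $-2H^{-3}\,\nabla^k H\,\nabla_k M_i^j$ vanishes, since the spatial gradient of the largest eigenvalue vanishes there; and the term $-2H^{-2}\,\nabla_i H\,\nabla^j H$ is $\leq 0$, so it can simply be discarded. Moreover $|M| = H\,|A|$ is uniformly bounded by Proposition \ref{2nd.fund.form} and Corollary \ref{lower.bound.for.H}, and the ambient curvature error is $O(e^{-\frac{n}{n-1}t})$ by Lemma \ref{curv_expansion}. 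Hence
\[ \frac{d}{dt}\,\mu_{\max} \leq \frac{2\,\mu_{\max}\big((n-1)-\mu_{\max}\big)}{H^2} + C\,e^{-\frac{n}{n-1}t}. \]
Inserting $H^2 = (n-1)^2 + O(t\,e^{-\frac{2}{n-1}t})$, using that $\mu_{\max}$ is bounded so that $\mu_{\max}\big((n-1)-\mu_{\max}\big) = O(1)$, and using $\frac{n}{n-1} > \frac{2}{n-1}$, one obtains
\[ \frac{d}{dt}\,\mu_{\max} \leq \frac{2\,\mu_{\max}\big((n-1)-\mu_{\max}\big)}{(n-1)^2} + C\,t\,e^{-\frac{2}{n-1}t}. \]
Whenever $\mu_{\max} \geq n-1$ the first term is $\leq -\frac{2}{n-1}\,(\mu_{\max}-(n-1))$, so an ODE comparison with the linear equation $\bar\psi' = -\frac{2}{n-1}\,\bar\psi + C\,t\,e^{-\frac{2}{n-1}t}$ gives $\mu_{\max}(t) - (n-1) \leq C\,t^2\,e^{-\frac{2}{n-1}t}$. (The factor $t^2$ is exactly the resonance of the $e^{-\frac{2}{n-1}t}$ forcing with the homogeneous decay rate $\frac{2}{n-1}$; this is the source of the corresponding factor in the statement, and a bootstrap argument would remove it.)

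To conclude I would combine this bound with the identity $\operatorname{tr} M_i^j = H\,h_i^i = H^2$. At each point of $\Sigma_t$, all $n-1$ eigenvalues of $M_i^j$ are $\leq n-1 + C\,t^2\,e^{-\frac{2}{n-1}t}$, while their sum equals $H^2 = (n-1)^2 + O(t\,e^{-\frac{2}{n-1}t})$; therefore each eigenvalue is also $\geq (n-1) - C\,t^2\,e^{-\frac{2}{n-1}t}$. This gives $|M_i^j - (n-1)\,\delta_i^j| \leq C\,t^2\,e^{-\frac{2}{n-1}t}$ as symmetric endomorphisms. Dividing by $H$ and applying the estimate for $H$ once more,
\[ h_i^j - \delta_i^j = \frac{1}{H}\big(M_i^j - (n-1)\,\delta_i^j\big) + \frac{(n-1)-H}{H}\,\delta_i^j = O\big(t^2\,e^{-\frac{2}{n-1}t}\big), \]
which is the desired estimate.

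The main obstacle is the maximum-principle computation in the second paragraph: one must check that every term in the rather long evolution equation \eqref{eq_M} is, at the maximal eigenvector, either nonpositive or of size $O(t\,e^{-\frac{2}{n-1}t})$ or smaller. This uses nothing beyond the upper and lower bounds for $H$ and $|A|$ established in the previous section and the curvature asymptotics of Lemma \ref{curv_expansion}, but it does require some care in bookkeeping. Everything after that — the ODE comparison and the eigenvalue-sandwiching argument — is elementary.
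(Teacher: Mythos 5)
Your proof is correct and follows the same strategy as the paper: derive an upper bound for the largest eigenvalue of $M_i^j = H\,h_i^j$ via the maximum principle applied to \eqref{eq_M} with the sharpened mean-curvature estimate plugged in, conclude $\mu_{\max} \leq n-1 + O(t^2 e^{-\frac{2}{n-1}t})$ by ODE comparison, and then obtain a matching lower bound on the eigenvalues from the trace. The only cosmetic difference is that you run the trace argument on $M_i^j$ (using $\operatorname{tr} M = H^2$) before dividing by $H$, whereas the paper first divides by $H$ to bound the top eigenvalue of $h_i^j$ and then uses $\operatorname{tr} h = H$; the two are equivalent.
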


\begin{proof}
As above, we define $M_i^j = H \, h_i^j$. We have shown above that $|M|$ is uniformly bounded, and $H=n-1+O(t \, e^{-\frac{2}{n-1} \, t})$. Hence, it follows from \eqref{eq_M} that 
\begin{align*}
\frac{\partial M^j_i}{\partial t}
&= \frac{\Delta M^j_i}{H^2} - 2 \, \frac{\nabla^k H \, \nabla_k M^j_i}{H^3} - 2 \, \frac{\nabla_i H \, \nabla^j H}{H^2} \\ 
&- \frac{2}{(n-1)^2} \, M^k_i \, M^j_k + \frac{2}{n-1} \, M^j_i + O(t \, e^{-\frac{n}{n-1} \, t}). 
\end{align*}
Let $\mu$ denote the largest eigenvalue of $M_i^j$, and let $\mu_{\text{\rm max}}(t)$ be the maximum of $\mu$ at a given time $t$. Then 
\begin{align*} 
\frac{d}{dt} \mu_{\text{\rm max}} 
&\leq -\frac{2}{(n-1)^2} \, \mu_{\text{\rm max}}^2 + \frac{2}{n-1} \, \mu_{\text{\rm max}} + O(t \, e^{-\frac{n}{n-1} \, t}) \\ 
&\leq 2-\frac{2}{n-1} \, \mu_{\text{\rm max}}+O(t \, e^{-\frac{2}{n-1} \, t}). 
\end{align*}
Thus, 
\[\mu_{\text{\rm max}} \leq n-1+O(t^2 \, e^{-\frac{2}{n-1} \, t}).\] 
As $M_i^j=Hh_i^j$ and $H=n-1+O(t \, e^{-\frac{2}{n-1} \, t})$, we conclude that the largest eigenvalue of the second fundamental form is less than $1+O(t^2 \, e^{-\frac{2}{n-1} \, t})$. Since $H=n-1+O(t \, e^{-\frac{2}{n-1} \, t})$, the smallest eigenvalue of the second fundamental form is greater than $1-O(t^2 \, e^{-\frac{2}{n-1} \, t})$.
\end{proof}

\section{The monotonicity formula}

As above, we consider a family of star-shaped surfaces $\Sigma_t$ evolving by inverse mean curvature flow. We define
\[Q(t) = |\Sigma_t|^{-\frac{n-2}{n-1}} \, \bigg ( \int_{\Sigma_t} f \, H \, d\mu - n(n-1) \int_\Omega f \, d\text{\rm vol} + (n-1) \, s_0^{n-2} \, |S^{n-1}| \bigg ),\]
where $f = \sqrt{1+\lambda^2-m \, \lambda^{2-n}}$ denotes the static potential.

We first evaluate the limit of $Q(t)$ as $t \to \infty$. To that end, we need the following auxiliary result:

\begin{proposition}
\label{sharp_sobolev}
For every positive function $u$ on $S^{n-1}$, we have
\begin{align*}
&\frac{1}{2} \int_{S^{n-1}} u^{n-4} \, |\nabla u|_{g_{S^{n-1}}}^2 \, d\text{\rm vol}_{S^{n-1}} + \int_{S^{n-1}} u^{n-2} \, d\text{\rm vol}_{S^{n-1}} \\
&\geq |S^{n-1}|^{\frac{1}{n-1}} \, \bigg ( \int_{S^{n-1}} u^{n-1} \, d\text{\rm vol}_{S^{n-1}} \bigg )^{\frac{n-2}{n-1}}.
\end{align*}
Moreover, equality holds if and only if $u$ is constant.
\end{proposition}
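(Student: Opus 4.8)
The plan is to substitute $u = v^{\frac{2}{n-2}}$ so that the left-hand side becomes, up to constants, a standard Dirichlet-plus-mass functional in $v$, and then to recognize the resulting inequality as a sharp Sobolev (or Beckner-type) inequality on $S^{n-1}$. More precisely, with $u^{n-4}|\nabla u|^2 = \frac{4}{(n-2)^2} \, v^{\frac{2(n-4)}{n-2} + \frac{2(n-1) - 2(n-2)}{n-2} \cdot \frac{?}{?}} |\nabla v|^2$ — I would carry out this bookkeeping carefully — one checks that the exponents are arranged exactly so that $u^{n-4}|\nabla u|^2$ is proportional to $v^{\alpha}|\nabla v|^2$ with $\alpha$ chosen to make the integrand conformally natural, while $u^{n-2} = v^2$ and $u^{n-1} = v^{\frac{2(n-1)}{n-2}}$. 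The exponent $\frac{2(n-1)}{n-2}$ is precisely the critical Sobolev exponent $\frac{2(n-1)}{(n-1)-2}$ in dimension $n-1$, which is the first sign that the inequality is the sharp Sobolev inequality on $S^{n-1}$ in disguise.

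After this reduction, the statement should read: for all positive $v$ on $S^{n-1}$,
\[
c_n \int_{S^{n-1}} |\nabla v|_{g_{S^{n-1}}}^2 \, d\text{\rm vol} + \int_{S^{n-1}} v^2 \, d\text{\rm vol} \geq |S^{n-1}|^{\frac{1}{n-1}} \, \bigg( \int_{S^{n-1}} v^{\frac{2(n-1)}{n-2}} \, d\text{\rm vol} \bigg)^{\frac{n-2}{n-1}}
\]
with a specific constant $c_n$. I would then invoke Beckner's sharp Sobolev inequality on $S^{n-1}$ (as cited in the introduction via \cite{Beckner}), which gives exactly this inequality with the optimal constant, extremized by constants (and, more generally, by the conformal factors of Möbius transformations — but after undoing the substitution $u = v^{\frac{2}{n-2}}$ only the constant extremals survive among positive $u$, since the non-constant conformal extremals do not stay in the class where the substitution is reversible, or can be excluded by a direct check). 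The equality case: equality in Beckner forces $v$ (hence $u$) to be the restriction of a conformal factor; tracking through the substitution and using that the original functional is not conformally invariant pins $u$ down to a constant. Alternatively — and this may be cleaner — one can prove the equality case directly: at equality the first variation shows $u$ solves an elliptic PDE whose only positive solution is constant.

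The main obstacle is matching the constant $c_n$ coming out of the substitution with the sharp Beckner constant, and correctly handling the equality case so as to rule out the non-constant conformal extremals that Beckner's inequality admits. A robust fallback, avoiding delicate constant-chasing, is to give a self-contained proof by the inverse-mean-curvature-flow or ODE method: parametrize $u$-level structure is not available here, so instead one can use the method of \cite{Guan-Ma-Trudinger-Zhu}-type flows, or simply verify the inequality via the substitution into the known one-parameter-sharp form. I expect the cleanest route in practice is: (i) do the substitution, (ii) cite Beckner for the inequality with equality iff $v$ is a conformal factor, (iii) observe that for the inequality as stated the relevant functional's lack of full conformal invariance (the $\int v^2$ term is not conformally weighted the way $\int v^{\frac{2(n-1)}{n-2}}$ is) forces the extremal to be a constant, completing the proof.
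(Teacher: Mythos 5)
Your substitution $v = u^{(n-2)/2}$ and appeal to Beckner is exactly the paper's route, so the outline is right. But the proposal stalls at precisely the point where you should have just finished the computation, and as a result you manufacture a difficulty (ruling out the non-constant conformal extremals of Beckner's inequality) that does not exist.

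Carrying out the bookkeeping: $|\nabla v|^2 = \tfrac{(n-2)^2}{4}\,u^{n-4}|\nabla u|^2$, $v^2 = u^{n-2}$, $v^{\frac{2(n-1)}{n-2}} = u^{n-1}$, and Beckner's sharp Sobolev inequality on $S^{n-1}$ reads
\[
\frac{2}{(n-2)(n-1)} \int_{S^{n-1}} |\nabla v|^2 + \int_{S^{n-1}} v^2 \;\geq\; |S^{n-1}|^{\frac{1}{n-1}} \Big( \int_{S^{n-1}} v^{\frac{2(n-1)}{n-2}} \Big)^{\frac{n-2}{n-1}}.
\]
Substituting gives the inequality of the proposition but with the gradient coefficient $\frac{n-2}{2(n-1)}$ rather than $\frac{1}{2}$. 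Since $\frac{n-2}{2(n-1)} < \frac{1}{2}$ for every $n \geq 3$, the stated inequality is strictly weaker than what Beckner gives, and the surplus $\big(\tfrac{1}{2} - \tfrac{n-2}{2(n-1)}\big)\int u^{n-4}|\nabla u|^2 = \tfrac{1}{2(n-1)}\int u^{n-4}|\nabla u|^2$ is strictly positive unless $u$ is constant. This makes the equality case immediate: equality in the proposition forces $\nabla u \equiv 0$, and conversely constants clearly give equality. Your concern about Möbius/conformal extremals, and the hand-waving about them "not staying in the class where the substitution is reversible" or being ruled out by "lack of full conformal invariance," is beside the point — those are extremals of Beckner's sharp inequality, not of the strictly weaker one being proved, and the gap between the two constants already excludes every nonconstant $u$. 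The fallback via flow methods is unnecessary and not really applicable to a purely spherical functional inequality.
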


\begin{proof}
It follows from Theorem 4 in \cite{Beckner} that
\begin{align*}
&\frac{2}{(n-2)(n-1)} \int_{S^{n-1}} |\nabla v|_{g_{S^{n-1}}}^2 \, d\text{\rm vol}_{S^{n-1}} + \int_{S^{n-1}} v^2 \, d\text{\rm vol}_{S^{n-1}} \\
&\geq |S^{n-1}|^{\frac{1}{n-1}} \, \bigg ( \int_{S^{n-1}} v^{\frac{2(n-1)}{n-2}} \, d\text{\rm vol}_{S^{n-1}} \bigg )^{\frac{n-2}{n-1}}
\end{align*}
for every positive smooth function $v$. Hence, if we put $v = u^{\frac{n-2}{2}}$, we obtain
\begin{align*}
&\frac{n-2}{2(n-1)} \int_{S^{n-1}} u^{n-4} \, |\nabla u|_{g_{S^{n-1}}}^2 \, d\text{\rm vol}_{S^{n-1}} + \int_{S^{n-1}} u^{n-2} \, d\text{\rm vol}_{S^{n-1}} \\
&\geq |S^{n-1}|^{\frac{1}{n-1}} \, \bigg ( \int_{S^{n-1}} u^{n-1} \, d\text{\rm vol}_{S^{n-1}} \bigg )^{\frac{n-2}{n-1}}.
\end{align*} From this, the assertion follows. \end{proof}

\begin{proposition}
We have $\liminf_{t \to \infty} Q(t) \geq (n-1) \, |S^{n-1}|^{\frac{1}{n-1}}$.
\end{proposition}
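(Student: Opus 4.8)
The plan is to compute the asymptotic behavior of $Q(t)$ as $t \to \infty$ using the estimates on the second fundamental form and gradient of $\varphi$ established in the previous sections, and then to invoke the sharp Sobolev inequality of Proposition \ref{sharp_sobolev}. First I would express all the geometric quantities in $Q(t)$ in terms of the profile function. Writing $\Sigma_t$ as a graph over $S^{n-1}$ via $\tilde{r}(\cdot,t)$, set $\lambda = \lambda(\tilde{r}(\theta,t))$ as a function on $S^{n-1}$; this is the $\lambda$ appearing in the claimed bound \eqref{asymptotics.Q}. By Lemma \ref{bounds.for.lambda} we have $\lambda = O(e^{\frac{1}{n-1}t})$ uniformly, and the rescaled quantity $\lambda\, e^{-\frac{1}{n-1}t}$ is bounded above and below. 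The asymptotically hyperbolic structure gives $f = \sqrt{1+\lambda^2 - m\lambda^{2-n}} = \lambda + O(\lambda^{-1}) = \lambda'(\tilde r) + o(1)$ after rescaling, and the volume form on $\Sigma_t$ is $\lambda^{n-1}\rho\, d\mathrm{vol}_{S^{n-1}} = \lambda^{n-1}\, d\mathrm{vol}_{S^{n-1}}(1+o(1))$ since $\rho = 1 + O(e^{-\frac{2}{n-1}t})$ by Proposition \ref{grad.estimate.varphi}.

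Next I would expand $\int_{\Sigma_t} f H \, d\mu$. Using Proposition \ref{induced_metric}, $H = \frac{1}{\lambda\rho}\big((n-1)\lambda' - \tilde\sigma^{ij}\varphi_{ij}\big)$, so $fH\, d\mu = f \cdot \frac{1}{\lambda\rho}\big((n-1)\lambda' - \tilde\sigma^{ij}\varphi_{ij}\big)\cdot \lambda^{n-1}\rho\, d\mathrm{vol}_{S^{n-1}}$. The leading term is $(n-1)\int_{S^{n-1}} \lambda^{n-2}(\lambda')^2\, d\mathrm{vol}$, which after using $(\lambda')^2 = 1 + \lambda^2 - m\lambda^{2-n}$ splits into a piece $(n-1)\int \lambda^{n-2}\, d\mathrm{vol}$ and a piece $(n-1)\int \lambda^n\, d\mathrm{vol}$; the latter must cancel against the bulk term $-n(n-1)\int_\Omega f\, d\mathrm{vol}$ up to lower-order contributions. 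For the bulk term I would use the coarea formula: $\int_\Omega f\, d\mathrm{vol} = \int_{S^{n-1}}\int_{s_0}^{\tilde r(\theta)} f(r)\lambda(r)^{n-1}\, dr\, d\mathrm{vol}_{S^{n-1}}$, and the antiderivative identity $\frac{d}{dr}\big(\tfrac{1}{n}\lambda^n \lambda'\big)$ or a similar primitive (coming from $f = \lambda'$ plus the ODE \eqref{lambda'}) lets one integrate explicitly, producing $\lambda^n \lambda'$ terms that combine with the horizon contribution $(n-1)s_0^{n-2}|S^{n-1}|$. The surviving gradient term from $-\tilde\sigma^{ij}\varphi_{ij}$, after integration by parts on $S^{n-1}$ against $f\lambda^{n-2}$, contributes precisely the $\frac{1}{2}\int \lambda^{n-4}|\nabla\lambda|^2$ term in \eqref{asymptotics.Q}; the chain rule $\nabla\varphi = \nabla\lambda / \lambda'$ (since $\varphi = \Phi(\tilde r)$, $\Phi' = 1/\lambda$, $\nabla\lambda = \lambda'\nabla\tilde r$, $\nabla\varphi = \frac{1}{\lambda}\nabla\tilde r$, hence $\nabla\lambda = \lambda\lambda'\nabla\varphi$) converts derivatives of $\varphi$ into derivatives of $\lambda$, and the error from $\rho, f, (\lambda')$ approximations is $o(1)$ after dividing by $|\Sigma_t|^{\frac{n-2}{n-1}}$ — here is where the second fundamental form bound $|h_i^j - \delta_i^j| = O(t^2 e^{-\frac{2}{n-1}t})$ from Proposition \ref{2nd.fund.form.2} is used to control the Hessian terms. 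Assembling all pieces yields \eqref{asymptotics.Q} with $\lambda$ in place of $u$, where the inequality (rather than equality) absorbs nonnegative lower-order terms into the $-o(1)$.

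Finally, I would apply Proposition \ref{sharp_sobolev} with $u = \lambda$ to the right-hand side of \eqref{asymptotics.Q}: it gives exactly
\[
\frac{1}{2}\int_{S^{n-1}} \lambda^{n-4}|\nabla\lambda|_{g_{S^{n-1}}}^2\, d\mathrm{vol}_{S^{n-1}} + \int_{S^{n-1}}\lambda^{n-2}\, d\mathrm{vol}_{S^{n-1}} \geq |S^{n-1}|^{\frac{1}{n-1}}\Big(\int_{S^{n-1}}\lambda^{n-1}\, d\mathrm{vol}_{S^{n-1}}\Big)^{\frac{n-2}{n-1}},
\]
so the entire expression in parentheses times $(n-1)$ times the normalizing power of $\int\lambda^{n-1}$ is bounded below by $(n-1)|S^{n-1}|^{\frac{1}{n-1}}$, uniformly in $t$. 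Hence $Q(t) \geq (n-1)|S^{n-1}|^{\frac{1}{n-1}} - o(1)$, and taking $\liminf$ gives the claim.

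I expect the main obstacle to be the bookkeeping in the second paragraph: showing that all the error terms — from $\rho = 1 + O(e^{-\frac{2}{n-1}t})$, from $f = \lambda' + $ lower order, from the difference between $\int\lambda^{n-1}$ and the true area $|\Sigma_t|$, and especially from the Hessian terms $\tilde\sigma^{ij}\varphi_{ij}$ which a priori are only $O(e^{\frac{1}{n-1}t})$ (growing!) — actually contribute only $o(1)$ after the division by $|\Sigma_t|^{\frac{n-2}{n-1}} \sim e^{\frac{n-2}{n-1}t}$. This requires carefully tracking the exact powers of $e^{\frac{1}{n-1}t}$ in every term, and using the improved estimates of Section 4 (not merely the crude bounds of Section 3) to see that the borderline terms are genuinely subleading; the integration by parts that converts the linear-in-Hessian term into the manifestly nonnegative gradient term is what makes this work, since the naive pointwise bound on $\varphi_{ij}$ is not good enough on its own.
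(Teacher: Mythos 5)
Your approach is substantively correct and hits all the key moves of the paper's proof: express everything as integrals over $S^{n-1}$ in terms of the profile $\lambda$, use Proposition \ref{2nd.fund.form.2} to control the Hessian errors, integrate the linear-in-Hessian piece by parts to produce the nonnegative gradient term, and finish with Proposition \ref{sharp_sobolev} with $u=\lambda$. The differences from the paper are in the bookkeeping, and both are worth being aware of. First, the paper handles the bulk term via the static identity $\bar{\Delta} f = nf$ and the divergence theorem (step \eqref{step.4}), which is intrinsic and works for any static metric; your proposal computes $\int_\Omega f\,d\mathrm{vol}$ explicitly via the coarea formula and the primitive $\frac{d}{dr}\big(\tfrac{1}{n}\lambda^n\big) = \lambda'\lambda^{n-1} = f\lambda^{n-1}$ (note: not $\tfrac{1}{n}\lambda^n\lambda'$ as you tentatively wrote), giving $n\int_\Omega f\,d\mathrm{vol} = \int_{S^{n-1}}\lambda^n\,d\mathrm{vol}_{S^{n-1}} - s_0^n|S^{n-1}|$; this exploits the explicit rotational symmetry rather than the static structure. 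Second, your direct route $\int_{\Sigma_t} fH\,d\mu = \int_{S^{n-1}} \lambda'\lambda^{n-2}\big((n-1)\lambda' - \tilde\sigma^{ij}\varphi_{ij}\big)\,d\mathrm{vol}_{S^{n-1}}$ (in which the $\rho$ factors from $H$ and $d\mu$ cancel exactly, so $\rho$ never needs to be expanded) produces after integration by parts the gradient term with coefficient $(n-1)\int\lambda^{n-4}|\nabla\lambda|^2$, i.e.\ twice the coefficient $\tfrac{n-1}{2}$ that appears in \eqref{asymptotics.Q}; the paper's smaller coefficient arises because step \eqref{step.3} deliberately discards a nonnegative piece of size $\approx\tfrac{n-1}{2}\int\lambda^{n-4}|\nabla\lambda|^2$ via the Cauchy--Schwarz bound $f-\langle\bar\nabla f,\nu\rangle \geq f-|\bar\nabla f|$. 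So your assertion that the Hessian piece "contributes precisely the $\tfrac12\int\lambda^{n-4}|\nabla\lambda|^2$ term" is off by a factor of $2$ --- but in the favorable direction: the lossless direct computation yields a \emph{stronger} lower bound than \eqref{asymptotics.Q}, and either coefficient suffices for Proposition \ref{sharp_sobolev} to close the argument, so the conclusion is unaffected.
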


\begin{proof}
Using the inequalities
\begin{align*}
&f = \lambda \, (1 + O(e^{-\frac{2}{n-1} \, t})), \\
&H-n+1 = O(t \, e^{-\frac{2}{n-1} \, t}), \\
&\sqrt{\det g} = \lambda^{n-1} \, \sqrt{\det g_{S^{n-1}}} \, (1 + O(e^{-\frac{2}{n-1} \, t})),
\end{align*}
we obtain
\begin{equation}
\label{step.1}
\int_{\Sigma_t} f \, (H - n+1) \, d\mu = \int_{S^{n-1}} \lambda^n \, (H - n+1) \, d\text{\rm vol}_{S^{n-1}} + O(t \, e^{\frac{n-4}{n-1} \, t}).
\end{equation}
By Proposition \ref{induced_metric}, the metric and second fundamental form on $\Sigma_t$ are given by
\[g_{ij} = \lambda^2 \, (\sigma_{ij}+\varphi_i\varphi_j)\]
and
\[h_{ij} = \frac{\lambda'}{\lambda \rho} \, g_{ij} - \frac{\lambda}{\rho} \, \varphi_{ij}.\]
Here, $\sigma_{ij}$ is the round metric on $S^{n-1}$ and $\varphi_{ij}$ is the Hessian of $\varphi$ with respect to $g_{S^{n-1}}$. By Proposition \ref{2nd.fund.form.2}, we have $|h-g|_g \leq O(t^2 \, e^{-\frac{2}{n-1} \, t})$. This implies
\[\Big | h - \frac{\lambda'}{\lambda \rho} \, g \Big |_g \leq O(t^2 \, e^{-\frac{2}{n-1} \, t}),\]
hence
\[\Big | h - \frac{\lambda'}{\lambda \rho} \, g \Big |_{g_{S^{n-1}}} \leq O(t^2).\] From this, we deduce that $|D^2 \varphi|_{g_{S^{n-1}}} \leq O(t^2 \, e^{-\frac{1}{n-1} \, t})$, where $D^2 \varphi$ denotes the Hessian of $\varphi$ with respect to $g_{S^{n-1}}$. Using Proposition \ref{grad.estimate.varphi}, we obtain
\[\tilde{\sigma}^{ij} \, \varphi_{ij} = \Delta_{S^{n-1}} \varphi + O(t^2 \, e^{-\frac{3}{n-1} \, t}).\]
This implies
\begin{align*}
H &= \frac{(n-1) \lambda'}{\lambda \rho} - \frac{1}{\lambda \rho} \, \tilde{\sigma}^{ij} \, \varphi_{ij} \\
&= \frac{(n-1) \lambda'}{\lambda \rho} - \frac{1}{\lambda \rho} \, \Delta_{S^{n-1}} \varphi + O(t^2 \, e^{-\frac{4}{n-1} \, t}).
\end{align*}
Since $\lambda' = \lambda + \frac{1}{2} \, \lambda^{-1} + O(e^{-\frac{2}{n-1} \, t})$ and $\frac{1}{\rho} = 1 - \frac{1}{2} \, |\nabla \varphi|_{g_{S^{n-1}}}^2 + O(e^{-\frac{4}{n-1} \, t})$, we conclude that
\[H = n-1 + \frac{n-1}{2\lambda^2} - \frac{n-1}{2} \, |\nabla \varphi|_{g_{S^{n-1}}}^2 - \frac{1}{\lambda} \, \Delta_{S^{n-1}} \varphi + O(e^{-\frac{3}{n-1} \, t}).\]
Substituting this identity into \eqref{step.1}, we obtain
\begin{align*}
&\int_{\Sigma_t} f \, (H - n+1) \, d\mu \\
&= \int_{S^{n-1}} \Big ( \frac{n-1}{2} \, \lambda^{n-2} - \frac{n-1}{2} \, \lambda^n \, |\nabla \varphi|_{g_{S^{n-1}}}^2 - \lambda^{n-1} \, \Delta_{S^{n-1}} \varphi \Big ) \, d\text{\rm vol}_{S^{n-1}} + O(e^{\frac{n-3}{n-1} \, t}) \\
&= \int_{S^{n-1}} \Big ( \frac{n-1}{2} \, \lambda^{n-2} - \frac{n-1}{2} \, \lambda^n \, |\nabla \varphi|_{g_{S^{n-1}}}^2 + (n-1) \, \lambda^{n-2} \, \langle \nabla \lambda,\nabla \varphi \rangle_{S^{n-1}} \Big ) \, d\text{\rm vol}_{S^{n-1}} \\
&+ O(e^{\frac{n-3}{n-1} \, t}).
\end{align*}
By Proposition \ref{grad.estimate.varphi}, we have $|\nabla \varphi|_{g_{S^{n-1}}} \leq O(e^{-\frac{1}{n-1}} \, t)$. Since $\nabla \lambda = \lambda\lambda' \, \nabla \varphi$, it follows that $|\nabla \lambda - \lambda^2 \, \nabla \varphi|_{g_{S^{n-1}}} \leq O(e^{-\frac{1}{n-1}} \, t)$. This implies
\begin{align}
\label{step.2}
&\int_{\Sigma_t} f \, (H - n+1) \, d\mu \notag \\ 
&= \int_{S^{n-1}} \Big ( \frac{n-1}{2} \, \lambda^{n-2} + \frac{n-1}{2} \, \lambda^{n-4} \, |\nabla \lambda|_{g_{S^{n-1}}}^2 \Big ) \, d\text{\rm vol}_{S^{n-1}} + O(e^{\frac{n-3}{n-1} \, t}).
\end{align}
On the other hand, the static potential satisfies
\begin{align*}
f - \langle \bar{\nabla} f,\nu \rangle
&\geq f-|\bar{\nabla} f| \\
&= \sqrt{1+\lambda^2-m\lambda^{2-n}}-(\lambda+\frac{m(n-2)}{2}\lambda^{1-n}) \\
&\geq \frac{1}{2} \, \lambda^{-1} - O(\lambda^{-2}). 
\end{align*}
This gives
\begin{equation}
\label{step.3}
(n-1) \int_{\Sigma_t} (f - \langle \bar{\nabla} f,\nu \rangle) \, d\mu \geq \frac{n-1}{2} \int_{S^{n-1}} \lambda^{n-2} \, d\text{\rm vol}_{S^{n-1}} - O(e^{\frac{n-3}{n-1} \, t}).
\end{equation}
Moreover, using the identity $\bar{\Delta} f = nf$ and the divergence theorem, we obtain 
\begin{equation} 
\label{step.4}
(n-1) \int_{\Sigma_t} \langle \bar{\nabla} f,\nu \rangle \, d\mu - n(n-1) \int_{\Omega_t} f \, d\text{\rm vol} = O(1). 
\end{equation}
Adding \eqref{step.2}, \eqref{step.3}, and \eqref{step.4}, we obtain
\begin{align*}
&\int_{\Sigma_t} f \, H \, d\mu - n(n-1) \, \int_{\Omega_t} f \, d\text{\rm vol} \\
&\geq \frac{n-1}{2} \int_{S^{n-1}} \lambda^{n-4} \, |\nabla \lambda|_{g_{S^{n-1}}}^2 \, d\text{\rm vol}_{S^{n-1}} \\
&+ (n-1) \int_{S^{n-1}} \lambda^{n-2} \, d\text{\rm vol}_{S^{n-1}} - O(e^{\frac{n-3}{n-1} \, t}).
\end{align*}
Moreover,
\[|\Sigma_t| = \int_{S^{n-1}} \lambda^{n-1} \, d\text{\rm vol}_{S^{n-1}} + O(e^{\frac{n-3}{n-1} \, t}).\]
Using Proposition \ref{sharp_sobolev}, we conclude that
\[\liminf_{t\to \infty} |\Sigma_t|^{-\frac{n-2}{n-1}} \, \bigg ( \int_{\Sigma_t} f \, H \, d\mu - n(n-1) \int_{\Omega_t} f \, d\text{\rm vol} \bigg ) \geq (n-1) \, |S^{n-1}|^{\frac{1}{n-1}}.\]
This completes the proof.
\end{proof}

Finally, we show that $Q(t)$ is monotone along the flow:

\begin{proposition}
The quantity $Q(t)$ is monotone decreasing in $t$.
\end{proposition}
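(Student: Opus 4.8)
The plan is to differentiate $Q(t)$ in $t$ and show the derivative is $\leq 0$. The starting point is the first variation formulae along inverse mean curvature flow: under $\partial_t F = H^{-1}\nu$, the area element evolves by $\partial_t(d\mu) = d\mu$, so $\frac{d}{dt}|\Sigma_t| = |\Sigma_t|$; the region $\Omega_t$ swept out satisfies $\frac{d}{dt}\int_{\Omega_t} f\,d\text{vol} = \int_{\Sigma_t}\frac{f}{H}\,d\mu$; and for the weighted total mean curvature we need $\frac{d}{dt}\int_{\Sigma_t} fH\,d\mu$. For the latter I would use $\partial_t H = \Delta(H^{-1}) + (|A|^2 + \text{Ric}(\nu,\nu))H^{-1}$ together with $\partial_t f = \frac{1}{H}\langle \bar\nabla f,\nu\rangle$ and $\partial_t(d\mu) = d\mu$, giving
\[
\frac{d}{dt}\int_{\Sigma_t} fH\,d\mu = \int_{\Sigma_t}\Big( \frac{\langle\bar\nabla f,\nu\rangle}{H}\,H + f\,\Delta(H^{-1}) + f\,\frac{|A|^2 + \text{Ric}(\nu,\nu)}{H} + fH \Big)\,d\mu.
\]
Integrating the $f\,\Delta(H^{-1})$ term by parts twice moves the Laplacian onto $f$, producing $\int \frac{\bar\Delta^\Sigma f}{H}\,d\mu$ where $\bar\Delta^\Sigma f$ denotes the Laplacian of the restriction of $f$; the difference between this and the ambient Laplacian $\bar\Delta f = nf$ involves the Hessian $\bar D^2 f$ in the normal direction and a term $H\langle\bar\nabla f,\nu\rangle$.

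The key algebraic input is the static equation \eqref{static_metric}, which lets me rewrite $\bar D^2 f(\nu,\nu) = (\bar\Delta f)\,\bar g(\nu,\nu) + f\,\text{Ric}(\nu,\nu) = nf + f\,\text{Ric}(\nu,\nu)$, and more importantly the tangential trace: contracting \eqref{static_metric} over $\Sigma$ gives $\Delta^\Sigma f$ (the intrinsic Laplacian) in terms of $f$, $\text{Ric}$ restricted tangentially, and second-derivative-of-$f$ terms. Putting these together — and using the Gauss equation to convert $\text{Ric}(\nu,\nu)$ and the tangential Ricci trace into $|A|^2$, $H^2$, and scalar curvature $\bar R = -n(n-1)$ — I expect the bulk terms to collapse so that
\[
\frac{d}{dt}\int_{\Sigma_t} fH\,d\mu - n(n-1)\frac{d}{dt}\int_{\Omega_t} f\,d\text{vol}
\]
simplifies to something like $\int_{\Sigma_t} fH\,d\mu$ plus a manifestly nonpositive traceless-second-fundamental-form term $-\int_{\Sigma_t}\frac{f}{H}\big(|A|^2 - \frac{1}{n-1}H^2\big)\,d\mu$, minus exactly the combination $(n-1)\big[n\int_{\Omega_t} f\,d\text{vol} + s_0^n|S^{n-1}| - (n-1)\int_{\Sigma_t}\frac{f}{H}\,d\mu\big]$ that the Brendle inequality controls.

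With that in hand the monotonicity reduces to a Cauchy–Schwarz bookkeeping step. Writing $P(t)$ for the bracketed quantity $\int_{\Sigma_t} fH\,d\mu - n(n-1)\int_\Omega f\,d\text{vol} + (n-1)s_0^{n-2}|S^{n-1}|$, so that $Q = |\Sigma_t|^{-\frac{n-2}{n-1}}P$, I compute
\[
Q'(t) = |\Sigma_t|^{-\frac{n-2}{n-1}}\Big( P'(t) - \frac{n-2}{n-1}\,P(t)\Big),
\]
using $\frac{d}{dt}|\Sigma_t| = |\Sigma_t|$. The goal is then $P'(t) \leq \frac{n-2}{n-1}P(t)$. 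From the computation above, $P'(t) = \int_{\Sigma_t} fH\,d\mu - (n-1)^2\int_{\Sigma_t}\frac{f}{H}\,d\mu + (n-1)n\int_{\Omega_t} f\,d\text{vol} + (n-1)s_0^n|S^{n-1}|\cdot(\text{something})$ up to the nonpositive Cauchy–Schwarz term; here I would invoke the Brendle inequality $(n-1)\int_{\Sigma_t}\frac{f}{H}\,d\mu \geq n\int_{\Omega_t} f\,d\text{vol} + s_0^n|S^{n-1}|$ to bound the $\int \frac{f}{H}$ terms, and the elementary inequality $\int_{\Sigma_t} fH\,d\mu \cdot (n-1)\int_{\Sigma_t}\frac{f}{H}\,d\mu \geq \big((n-1)\int_{\Sigma_t}f\,d\mu\big)^2 \cdot (\text{correction})$ — more precisely a weighted Cauchy–Schwarz between $\sqrt{fH}$ and $\sqrt{f/H}$ — to close the loop. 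The main obstacle I anticipate is precisely this last step: matching the exponents so that the Hölder/Cauchy–Schwarz inequality between the weighted mean-curvature integral and its reciprocal, combined with Brendle's inequality, produces exactly the factor $\frac{n-2}{n-1}$ and no slack; getting the static-equation simplification to land on the clean form above, with all the ambient curvature contributions cancelling against $\bar R = -n(n-1)$, is where the delicate identities in \eqref{static_metric} and \eqref{static_potential} have to be used in just the right combination.
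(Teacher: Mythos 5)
Your overall strategy — differentiate $Q$, use the evolution equation for $H$, integrate by parts, invoke the static equation \eqref{static_metric} to cancel curvature terms, then finish with the traceless inequality $|A|^2 \geq \tfrac{1}{n-1}H^2$, the divergence theorem, and Brendle's inequality — is indeed the paper's strategy. But there are two concrete problems in the writeup.

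First, a sign error: under $\partial_t F = H^{-1}\nu$ the mean curvature evolves by
$\partial_t H = -\Delta(H^{-1}) - (|A|^2 + \mathrm{Ric}(\nu,\nu))H^{-1}$,
not with the opposite signs as you wrote. Because you then also write $\partial_t(fH)$ with a $+fH$ term from the volume element and integrate by parts, this wrong sign propagates and your asserted formula for $P'(t)$ comes out with coefficients that do not match what a careful computation gives (for instance the coefficient of $\int_{\Sigma_t}\tfrac{f}{H}\,d\mu$ should be $-n(n-1)$, coming from $\tfrac{d}{dt}\int_{\Omega_t}f\,d\mathrm{vol}=\int_{\Sigma_t}\tfrac{f}{H}\,d\mu$, and the $\int_{\Omega_t}f$ contribution from $2\int_{\Sigma_t}\langle\bar\nabla f,\nu\rangle\,d\mu$ should carry coefficient $2n$, not $n(n-1)$).

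Second, and more fundamentally, the final ``Cauchy--Schwarz bookkeeping'' step you flag as the main obstacle is not actually part of the argument, and the version you propose would not close it. No integral Cauchy--Schwarz between $\int fH\,d\mu$ and $\int \tfrac{f}{H}\,d\mu$ is needed, nor would the inequality $\int fH\,d\mu\cdot\int\tfrac{f}{H}\,d\mu \geq (\int f\,d\mu)^2$ help: it bounds the wrong thing in the wrong direction. What actually closes the loop is purely the linear bookkeeping of boundary constants. After using the static equation at $(\nu,\nu)$ to cancel $\bar\Delta f - \bar D^2f(\nu,\nu) + f\,\mathrm{Ric}(\nu,\nu)$, one gets $\tfrac{d}{dt}\int fH\,d\mu = -\int \tfrac{f}{H}|A|^2\,d\mu + \int(2\langle\bar\nabla f,\nu\rangle + fH)\,d\mu$, then applies pointwise $|A|^2 \geq \tfrac{1}{n-1}H^2$ to replace $-\int\tfrac{f}{H}|A|^2$ by $-\tfrac{1}{n-1}\int fH$, uses $\bar\Delta f=nf$ and the divergence theorem to evaluate $\int_{\Sigma_t}\langle\bar\nabla f,\nu\rangle\,d\mu = n\int_{\Omega_t}f\,d\mathrm{vol} + \tfrac{(n-2)m+2s_0^n}{2}|S^{n-1}|$, and applies Brendle's inequality to convert $-n(n-1)\int\tfrac{f}{H}$ into $-n^2\int_{\Omega_t}f - ns_0^n|S^{n-1}|$. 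The horizon constants then combine exactly into $\tfrac{n-2}{n-1}\cdot(n-1)s_0^{n-2}|S^{n-1}|$ using the defining relation $1 + s_0^2 - ms_0^{2-n} = 0$, i.e.\ $m - s_0^n = s_0^{n-2}$. That identity for $s_0$ — not an extra Cauchy--Schwarz — is the missing ingredient you were searching for, and it is why the factor $\tfrac{n-2}{n-1}$ appears with no slack.
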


\begin{proof}
The evolution of the mean curvature is given by
\[\frac{\partial}{\partial t} H = -\Delta \Big ( \frac{1}{H} \Big ) - \frac{1}{H} \,  (|A|^2 + \text{\rm \text{\rm Ric}}(\nu,\nu)).\]
This implies
\[\frac{\partial}{\partial t} (f \, H) = -f \, \Delta \Big ( \frac{1}{H} \Big ) - \frac{f}{H} \,  (|A|^2 + \text{\rm \text{\rm Ric}}(\nu,\nu)) + \langle \bar{\nabla} f,\nu \rangle.\]
Using the identity $\Delta f = \bar{\Delta} f - (D^2 f)(\nu,\nu) - H \, \langle \bar{\nabla} f,\nu \rangle$, we obtain
\begin{align}
\label{calculation}
\frac{d}{dt} \bigg ( \int_{\Sigma_t} f \, H \, d\mu \bigg ) 
&= -\int_{\Sigma_t} f \, \Delta \Big ( \frac{1}{H} \Big ) \, d\mu - \int_{\Sigma_t} \frac{f}{H} \, (|A|^2 + \text{\rm \text{\rm Ric}}(\nu,\nu)) \, d\mu \notag \\
&+ \int_{\Sigma_t} (\langle \bar{\nabla} f,\nu \rangle + f \, H) \, d\mu \notag \\
&= -\int_{\Sigma_t} \frac{1}{H} \, \Delta f \, d\mu - \int_{\Sigma_t} \frac{f}{H} \, (|A|^2 + \text{\rm Ric}(\nu,\nu)) \, d\mu \notag \\
&+ \int_{\Sigma_t} (\langle \bar{\nabla} f,\nu \rangle + f \, H) \, d\mu \notag \\
&= -\int_{\Sigma_t} \frac{1}{H} \, (\bar{\Delta} f - (D^2 f)(\nu,\nu)) \, d\mu \\ 
&- \int_{\Sigma_t} \frac{f}{H} \, (|A|^2 + \text{\rm Ric}(\nu,\nu)) \, d\mu \notag \\
&+ \int_{\Sigma_t} (2 \, \langle \bar{\nabla} f,\nu \rangle + f \, H) \, d\mu \notag \\
&= -\int_{\Sigma_t} \frac{f}{H} \, |A|^2 + \int_{\Sigma_t} (2 \, \langle \bar{\nabla} f,\nu \rangle + f \, H) \, d\mu \notag \\
&\leq \int_{\Sigma_t} \Big ( 2 \, \langle \bar{\nabla} f,\nu \rangle + \frac{n-2}{n-1} \, f \, H \Big ) \, d\mu. \notag
\end{align}
Using the identity $\bar{\Delta} f = nf$ and the divergence theorem, we obtain
\[\int_{\Sigma_t} \langle \bar{\nabla} f,\nu \rangle \, d\mu = n \int_{\Omega_t} f \, d\text{\rm vol} + \frac{(n-2)m + 2s_0^n}{2} \, |S^{n-1}|.\] 
Moreover, it was shown in \cite{Brendle} that
\[(n-1) \int_{\Sigma_t} \frac{f}{H} \, d\mu \geq n \int_{\Omega_t} f \, d\text{\rm vol} + s_0^n \, |S^{n-1}|.\] 
Putting these facts together, we conclude that
\begin{align*}
&\frac{d}{dt} \bigg ( \int_{\Sigma_t} f \, H \, d\mu - n(n-1) \int_{\Omega_t} f \, d\text{\rm vol} \bigg ) \\
&\leq \int_{\Sigma_t} \Big ( 2 \, \langle \bar{\nabla} f,\nu \rangle \, d\mu + \frac{n-2}{n-1} \, f \, H - n(n-1) \, \frac{f}{H} \Big ) \, d\mu \\
&\leq \frac{n-2}{n-1} \int_{\Sigma_t} f \, H \, d\mu - n(n-2) \int_{\Omega_t} f \, d\text{\rm vol} \\
&+ ((n-2)m + 2 s_0^n) \, |S^{n-1}| - n \, s_0^n \, |S^{n-1}| \\
&= \frac{n-2}{n-1} \, \bigg ( \int_{\Sigma_t} f \, H \, d\mu - n(n-1) \int_{\Omega_t} f \, d\text{\rm vol} + (n-1) \, s_0^{n-2} \, |S^{n-1}| \bigg ).
\end{align*}
Thus, we conclude that $\frac{d}{dt} Q(t) \leq 0$, and equality holds when the surfaces $\Sigma_t$ are coordinate spheres.
\end{proof}

\begin{corollary}
We have 
\begin{align*}
&\int_{\Sigma_0} f \, H \, d\mu - n(n-1) \int_{\Omega_0} f \, d\text{\rm vol} \\
&\geq (n-1) \, |S^{n-1}|^{\frac{1}{n-1}} \, \big ( |\Sigma_0|^{\frac{n-2}{n-1}} - |\partial M|^{\frac{n-2}{n-1}} \big ).
\end{align*}
\end{corollary}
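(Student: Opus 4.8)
The plan is to read off the corollary directly from the two properties of $Q(t)$ proved above: monotonicity and the asymptotic lower bound. First I would invoke the preceding proposition, which asserts that $Q(t)$ is monotone decreasing in $t$, to get $Q(0) \geq \liminf_{t \to \infty} Q(t)$. Combining this with the estimate $\liminf_{t \to \infty} Q(t) \geq (n-1) \, |S^{n-1}|^{\frac{1}{n-1}}$ already established, I obtain the single inequality $Q(0) \geq (n-1) \, |S^{n-1}|^{\frac{1}{n-1}}$.

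Next I would unwind the definition of $Q(0)$. Multiplying the inequality $Q(0) \geq (n-1) \, |S^{n-1}|^{\frac{1}{n-1}}$ through by the positive quantity $|\Sigma_0|^{\frac{n-2}{n-1}}$ gives
\[
\int_{\Sigma_0} f \, H \, d\mu - n(n-1) \int_{\Omega_0} f \, d\text{\rm vol} + (n-1) \, s_0^{n-2} \, |S^{n-1}| \geq (n-1) \, |S^{n-1}|^{\frac{1}{n-1}} \, |\Sigma_0|^{\frac{n-2}{n-1}}.
\]
It then remains only to identify the constant term $(n-1) \, s_0^{n-2} \, |S^{n-1}|$ with the horizon contribution $(n-1) \, |S^{n-1}|^{\frac{1}{n-1}} \, |\partial M|^{\frac{n-2}{n-1}}$ appearing in the statement.

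This last identification is a one-line computation of the horizon area. Since $\partial M = S^{n-1} \times \{s_0\}$ carries the induced metric $s_0^2 \, g_{S^{n-1}}$, we have $|\partial M| = s_0^{n-1} \, |S^{n-1}|$, and hence
\[
(n-1) \, |S^{n-1}|^{\frac{1}{n-1}} \, |\partial M|^{\frac{n-2}{n-1}} = (n-1) \, |S^{n-1}|^{\frac{1}{n-1}} \, s_0^{n-2} \, |S^{n-1}|^{\frac{n-2}{n-1}} = (n-1) \, s_0^{n-2} \, |S^{n-1}|.
\]
Substituting this into the displayed inequality and moving the horizon term to the right-hand side yields exactly the asserted inequality.

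There is essentially no obstacle in this final step: all the analytic content sits in the monotonicity proposition and the $\liminf$ estimate, both of which may be assumed. The only point that deserves a moment's attention — and the reason the horizon-area computation is worth recording — is the bookkeeping that reconciles the additive constant $(n-1) \, s_0^{n-2} \, |S^{n-1}|$ built into the definition of $Q$ with the term $(n-1) \, |S^{n-1}|^{\frac{1}{n-1}} \, |\partial M|^{\frac{n-2}{n-1}}$ that appears in the final form of the inequality.
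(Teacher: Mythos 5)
Your argument is correct and coincides exactly with the paper's own proof: invoke monotonicity of $Q$ and the $\liminf$ estimate to get $Q(0) \geq (n-1)\,|S^{n-1}|^{\frac{1}{n-1}}$, unwind the definition of $Q(0)$, and identify the constant via $|\partial M| = s_0^{n-1}\,|S^{n-1}|$. Nothing more is needed.
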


\begin{proof} 
Since $Q(t)$ is monotone decreasing, we have 
\[Q(0) \geq \liminf_{t \to \infty} Q(t) \geq (n-1) \, |S^{n-1}|^{\frac{1}{n-1}}.\] 
This implies 
\begin{align*}
&\int_{\Sigma_0} f \, H \, d\mu - n(n-1) \int_{\Omega_0} f \, d\text{\rm vol} \\
&\geq (n-1) \, |S^{n-1}|^{\frac{1}{n-1}} \, |\Sigma_0|^{\frac{n-2}{n-1}} - (n-1) \, s_0^{n-2} \, |S^{n-1}|.
\end{align*}
Since $|\partial M| = s_0^{n-1} \, |S^{n-1}|$, the assertion follows.
\end{proof}

It remains to discuss the case of equality. Suppose that 
\begin{align*}
&\int_{\Sigma_0} f \, H \, d\mu - n(n-1) \int_{\Omega_0} f \, d\text{\rm vol} \\
&= (n-1) \, |S^{n-1}|^{\frac{1}{n-1}} \, \big ( |\Sigma_0|^{\frac{n-2}{n-1}} - |\partial M|^{\frac{n-2}{n-1}} \big ).
\end{align*} 
In this case, the function $Q(t)$ is constant. In particular, we must have equality in \eqref{calculation}. Consequently, the surface $\Sigma_0$ is umbilic. If the mass $m$ is positive, it follows that $\Sigma_0$ is a coordinate sphere, as claimed. On the other hand, if the mass $m$ vanishes, then $\Sigma_0$ must be a geodesic sphere centered at some point $x_0$. If $x_0$ is not the origin, then the function $\lambda$ converges to a non-constant function on $S^{n-1}$ after rescaling. Using the equality statement in Proposition \ref{sharp_sobolev}, we conclude that $\liminf_{t \to \infty} Q(t) > (n-1) \, |S^{n-1}|^{\frac{1}{n-1}}$, contrary to our assumption. Thus, $\Sigma_0$ must be a geodesic sphere centered at the origin.

\end{document}